\newtheorem{theorem}{Theorem}[section]
\newtheorem{lemma}{Lemma}[section]
\newtheorem{proposition}{Proposition}[section]
\newtheorem{definition}{Definition}[section]
\tikzset{
    >=stealth',
    punkt/.style={
           rectangle,
           rounded corners,
           draw=black, very thick,
           text width=6.5em,
           minimum height=2em,
           text centered},
    pil/.style={
           ->,
           thick,
           shorten <=2pt,
           shorten >=2pt,}
}
\DeclareMathOperator*\uplim{\overline{lim}}
\DeclareMathOperator*\lowerlim{\underline{lim}}
\newtheoremstyle{break}
{\topsep}{\topsep}%
{\itshape}{}%
{\bfseries}{}%
{\newline}{}%
\theoremstyle{break}
\title{A large deviation principle for the Schramm-Loewner evolution in the uniform topology}
\date{\empty}
\author{Vladislav Guskov}
\begin{document}

\clearpage
\maketitle
\thispagestyle{empty}
\begin{center}\justifying
\textbf{Abstract}. We establish a large deviation principle for chordal SLE$_\kappa$ parametrized by capacity, as the parameter $\kappa \to 0+$, in the topology generated by uniform convergence on compact intervals of the positive real line. The rate function is shown to equal the Loewner energy of the curve. This strengthens the recent result of E. Peltola and Y. Wang who obtained the analogous statement using the Hausdorff topology. 
\end{center}
\footnotebl{Department of mathematics, KTH Royal Institute of Technology, 100 44,  Stockholm, Sweden.\\ E-mail: \texttt{vguskov@kth.se}}
	\section{Introduction}	The Schramm-Loewner evolution with parameter $\kappa > 0$, subsequently referred to as SLE$_\kappa$, is a fractal random curve that connects two marked points $a,b \in \partial D$ on the boundary of a simply connected domain $D$ in the complex plane. SLE curves arise as scaling limits of interfaces in 2D critical lattice models, see, e.g., \cite{LSW04, Smirnov10, smirnov2001critical, Schramm_Sheffield_2005}, and play an important part in the analysis of the geometry of the Gaussian free field \cite{Schramm_Sheffielde_2009, miller2016imaginary}. 
	
	In the last few years, starting with Y. Wang's paper \cite{Wang2019}, there has been substantial interest in various questions related to large deviations of SLE$_\kappa$ as the parameter $\kappa$ tends to $0+$ as well as $+\infty$. One reason for this is that the large deviations rate functions that appear in such statements turn out to be interesting quantities that somewhat surprisingly provide links between random conformal geometry and other areas such as Teichm\"uller theory and related parts of analysis and geometry, see \cite{Wang2022survey}. The key quantity in this paper is the \emph{Loewner energy}, defined below for chords, which enters the story as the large deviations rate function for SLE$_\kappa$ as $\kappa \downarrow 0$. (Given a domain $D$ with given boundary points $a,b$, a \emph{chord} is a simple curve in $D$ connecting $a$ and $b$, otherwise staying in $D$.)

	Chordal SLE curves are constructed using the Loewner differential equation 
	\[
	\partial_t g_t(z) = \frac{2}{g_t(z) - \lambda(t)}, \quad g_0(z) = z \in \mathbb{H}.
	\] 
	Here, $(\lambda(t))_{t \ge 0}$ is a continuous real-valued (driving) function and, for each $t\ge 0$, the solution $g_t$ is a conformal map from a simply connected subset of the upper half-plane $\mathbb{H}$ onto $\mathbb{H}$. If $\lambda$ is smooth enough, $\gamma(t) = \lim_{y \to 0+} g_t^{-1}(\lambda(t) + iy), \, t\ge 0,$ defines a chord in $\mathbb{H}$ connecting the origin with infinity.  The parametrization for the curve that one gets from this construction is called the \emph{capacity parametrization}. Conversely, starting with a chord $\gamma$ connecting $0$ with $\infty$ in $\mathbb{H}$, one can recover the Loewner driving function via the equation on the maps $g_t: \mathbb{H} \setminus \gamma[0,t] \to \mathbb{H}$, where the curve is assumed to be parametrized so that $\textrm{hcap}(\gamma[0,t]) =2t$, i.e., near infinity the map has the following expansion $g_t(z) = z + 2t/z + o(1/|z|)$ as $z \to \infty$. In the case of SLE$_\kappa$, one takes the driving function to be rescaled standard Brownian motion $(\sqrt{\kappa}B(t))_{t \ge 0}$. It is a non-trivial fact that the above limit defines a curve in this case, see \cite{RohdeSchramm}. 
	
	The analytic behavior of SLE$_\kappa$ curves depends strongly on the $\kappa$-parameter. For example, with probability one, the Hausdorff dimension of the curve equals $\min \left(1+\kappa/8, 2\right)$, see \cite{Beffara}, and it is known that for $\kappa \le 4$ the SLE curve is almost surely simple \cite{RohdeSchramm}; in what follows we consider only this regime. 
	
	Now we give a heuristic description of large deviations for SLE. In the limit as $\kappa \downarrow 0$, it is not hard to see that the SLE$_\kappa$ curve converges to the deterministic hyperbolic geodesic chord $\eta$ connecting the two marked boundary points in $D$; by definition this is the image of the imaginary axis under a conformal map from the upper half-plane onto $D$ taking $0$ to $a$ and $\infty$ to $b$. Hence if we take a suitable family $V$ of chords that does not include $\eta$, then			\begin{equation*}
					\mathbb{P}\left[\text{SLE}_\kappa \in V\right] \to 0 \text{ as }\kappa\downarrow 0.
			\end{equation*}

The leading order convergence rate is provided by a \textit{large deviation principle}, abbreviated LDP in the sequel. Roughly speaking, the probability decays exponentially fast in $1/\kappa$:
				\begin{equation*}
					\mathbb{P}\left[\text{SLE}_{\kappa} \in V\right] \approx e^{-\frac{I(V)}{\kappa}} \text{ as } \kappa \downarrow 0.
				\end{equation*} 
			The rate of decay is given by 
			\begin{equation*}
			I\left(V\right) = \inf\limits_{\gamma \in V}I_{L}\left(\gamma\right),
	 		\end{equation*}
			where the \emph{rate function} \[I_L(\gamma) = \frac{1}{2} \int_0^\infty {\dot\lambda}(t)^2 dt \] is the Loewner energy introduced by Y. Wang in \cite{Wang2019} and subsequently studied in a number of papers, see, e.g., \cite{Wang_equivalent, Viklund_Wang_Interplay, Viklund_Wang_Loewner-Kufarev, Rohde_Wang_Loewner_energy}. Here, $\lambda$ is the Loewner driving function for $\gamma$, see below for further comments on this definition.
				
	In the present paper $I_L$ only enters as the rate function and we will not consider its other interpretations. Let us however very briefly mention some of the known facts. First, we note that chords with finite Loewner energy are quite smooth: they are known to be rectifiable quasi-slits, i.e., quasiconformal images of the geodesic $\eta$, but not necessarily $C^1$, see \cite{Rohde_Wang_Loewner_energy, PeltolaWang}. Remarkably, the Loewner energy is invariant with respect to reversing the curve, see \cite{Wang2019}. It is possible to define the Loewner energy for Jordan curves \cite{Rohde_Wang_Loewner_energy}, which in this case is M\"obius invariant. In this more general setting the family of finite energy curves can be identified with the class of Weil-Petersson quasicircles. The Loewner energy turns out to be (a constant times) the K\"ahler potential for the Weil-Petersson metric on the Weil-Petersson Teichm\"uller space, i.e., the set of Weil-Petersson quasicircles viewed as elements of the universal Teichm\"uller space, see \cite{Wang_equivalent, Bishop, Takhtajan_Teo}. See also \cite{Johansson} for the emergence of the Loewner energy in the context of the Szeg{\H o} theorem.

			\subsection{Main result}
			We first give the precise meaning of large deviation principle for SLE assuming a suitable topological space $(S,\tau)$ has been chosen. 
	\begin{definition}\label{definition_of_LDP}
	We say that the chordal SLE$_\kappa$ curve $\gamma^\kappa$ in a given topological space $\left(S, \tau\right)$ satisfies a large deviation principle with rate function $I_L:S\to[0,\infty]$ if  
		\begin{center}
	  		1. $\uplim\limits_{\kappa \downarrow 0}\kappa\log\mathbb{P}\left[\gamma^\kappa \in F\right]\le - I_L(F)$ for any closed subset $F$ of $S$,\\
			2. $\lowerlim\limits_{\kappa\downarrow 0}\kappa\log\mathbb{P}\left[\gamma^\kappa \in G\right]\ge - I_L(G)$ for any open subset $G$ of $S$,
		\end{center}
		where $I_L(V) = \inf\limits_{\gamma\in V}I_L(\gamma)$ for any subset $V\subset S$. 
	\end{definition}
	   There are several natural choices of the topological space in the case at hand. From Schilder's theorem (see below) and the contraction principle (Theorem \ref{contraction principle} in Section \ref{section_proof}) one immediately obtains an LDP using the Carath\'eodory topology on the conformal maps solving the Loewner equation.  E. Peltola and Y. Wang obtained an LDP for SLE$_\kappa$ curves viewed as sets, using the topology induced by the Hausdorff metric on compact subsets. (See also \cite{Wang2019} for another version based on the prescribed left/right passage given marked points in $\mathbb{H}$.) 
	   
	   In the present paper we establish an LDP for SLE$_\kappa$ curves, as $\kappa \downarrow 0$, viewed as continuous curves in the capacity parametrization, using the topology generated by uniform convergence on compact intervals which in this context we shall call the \textit{uniform topology}. This strengthens the result obtained in \cite{PeltolaWang} and places the LDP for SLE in perhaps a more natural setting. Moreover, in view of applications it is technically useful to have this stronger LDP at hand: for example, there are important ``observables'' that are continuous in the uniform topology but not in the Hausdorff topology. One instance is the harmonic measure of the left side of the curve, see \cite{Krusell} for detailed description. We will discuss other possible choices for the topology in Section~\ref{sect:further-comments}.

	 In order to state our main result, let $S$ denote the space of  continuous capacity-parameterized simple curves in the upper half-plane started at the origin
	 \begin{equation}\label{definition_S}
	 \begin{split}
	S = \big\{\gamma \in C\left([0,\infty), \overline{\mathbb{H}}\right):& \gamma(0)=0, \gamma[0,\infty) \text{ is a simple curve},\\ &\text{ and hcap}(\gamma[0,t]) = 2t \text{ for } t\in [0,\infty) \big\}.
	 \end{split}
	 \end{equation}
	 We endow this space with the topology $\tau$ of uniform convergence on compact intervals of the positive real line (compact convergence). 	 
	 \begin{theorem}\label{LDP_theorem}
		As $\kappa \downarrow 0$, SLE$_\kappa$ satisfies the large deviation principle in the topological space $\left(S, \tau\right)$ with the Loewner energy $I_L$ as a good rate function\footnote{The notion of \textit{good rate function} is explained in Section \ref{section_rate_function}.}.
	 \end{theorem}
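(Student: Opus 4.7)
My strategy is to upgrade the Peltola--Wang LDP in the Hausdorff topology (quoted in the excerpt) to an LDP in the finer uniform topology $\tau$, by combining it with exponential tightness in $\tau$. Let $\pi: S \to \mathcal{H}$ be the map sending a capacity-parametrized chord to its trace, viewed as a compact subset of the one-point compactification of $\overline{\mathbb{H}}$ endowed with the Hausdorff metric. Uniform convergence on compacts of curves in $S$ clearly implies Hausdorff convergence of the traces, so $\pi$ is continuous. Moreover, $\pi$ is injective on $S$: given the trace as a set, the capacity parametrization is uniquely reconstructed via the condition $\textrm{hcap}(\gamma[0,t])=2t$. Thus, once exponential tightness of $\gamma^\kappa$ in $(S,\tau)$ is established, the inverse contraction principle (see, e.g., Dembo--Zeitouni, Theorem 4.2.4) lifts the Peltola--Wang LDP from $\mathcal{H}$ to $(S,\tau)$ with the same rate function $I_L$, and goodness of $I_L$ in $(S,\tau)$ drops out as a byproduct.

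The main technical task is therefore the exponential tightness. By Arzel\`a--Ascoli and a standard $T\to\infty$ truncation, it suffices to produce, for every $T,\varepsilon,M>0$, a modulus $\delta = \delta(T,\varepsilon,M)>0$ such that
\[
\limsup_{\kappa \downarrow 0} \kappa \log \mathbb{P}\!\left[\sup_{\substack{s,t \in [0,T]\\|s-t|<\delta}}|\gamma^\kappa(s)-\gamma^\kappa(t)| > \varepsilon\right] \le -M,
\]
together with an analogous estimate ensuring that $\gamma^\kappa$ escapes any given compact set after a large time with overwhelming probability; the latter is essentially automatic since $|\gamma^\kappa(t)|$ is controlled below by $\sqrt{t}$ via the capacity normalization. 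I would obtain the above modulus estimate in two steps. First, Schilder's theorem supplies the required exponential control on the oscillation of the driver $\sqrt{\kappa}B$ on $[0,T]$. Second, a deterministic Loewner estimate (in the spirit of Lind--Rohde--Tran and Friz--Shekhar) bounds the modulus of a chord in terms of the modulus of its driver on $[0,T]$. Because the second step is purely deterministic, the Schilder probability bound on the driver transfers at no probabilistic cost into the desired bound on the curve.

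The chief obstacle is precisely this quantitative deterministic Loewner estimate. In general the Loewner transform is notoriously ill-behaved as a map from drivers to curves, so one has to be careful to work on the ``good'' event provided by Schilder, on which the driver has controlled oscillation and the standard deterministic H\"older estimates become available. A secondary subtlety is to patch the modulus estimates on all intervals $[0,T]$ together with the large-$t$ tail bound in a way that produces a genuine $\tau$-compact set (rather than only a relatively compact one), but this is a routine application of Arzel\`a--Ascoli in the function space $C([0,\infty),\overline{\mathbb{H}})$. Once these pieces are assembled, the inverse contraction principle yields Theorem \ref{LDP_theorem}.
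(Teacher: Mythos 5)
Your overall route---lifting the Peltola--Wang Hausdorff LDP through the inverse contraction principle by proving exponential tightness in $\tau$---is exactly the alternative strategy the paper discusses in Section~\ref{sect:further-comments}, but as written it has two genuine gaps. First, the map $\pi$ sending a capacity-parametrized chord to its full trace is \emph{not} continuous from $(S,\tau)$ to the Hausdorff space: compact convergence constrains only initial segments, while the Hausdorff metric sees the entire chord. For example, curves that follow the hyperbolic geodesic up to capacity time $n$, then descend and pass through the fixed point $10+i$ before escaping to infinity, converge in $\tau$ to the geodesic, yet their traces remain at a definite Hausdorff distance from the imaginary axis. So the inverse contraction principle cannot be applied directly to the full-chord statement of \cite{PeltolaWang}; you would have to work at finite capacity horizons (where $\gamma\mapsto\gamma[0,T]$ \emph{is} continuous into the Hausdorff space), which requires a finite-time version of the Hausdorff LDP, and you would still need a projective-limit/contraction step as in Section~\ref{section_proof} to reach $[0,\infty)$.

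Second, and more seriously, the ``purely deterministic'' transfer of a modulus of continuity from the driver to the curve does not exist. Small oscillation of $\lambda$ on $[t,t+\delta]$ does not bound $\mathrm{diam}\,\gamma[t,t+\delta]$: this is precisely the failure of continuity of the Loewner map (the curve can shoot through a previously created fjord, covering macroscopic distance in capacity time $\delta$ while the driver barely moves). What controls the increment is $\sup_{y\le\sqrt{\delta}}|\hat f_t'(iy)|$, the derivative of the inverse map near the preimage of the tip, and this depends on the fine-scale behavior of the driver before time $t$. Schilder's theorem cannot furnish a ``good event'' on which Lind-type deterministic estimates (H\"older-$1/2$ norm less than $4$) hold at all scales, because almost surely the H\"older-$1/2$ constant of $\sqrt{\kappa}B$ is infinite at fine scales: the event you would need has probability zero, not merely an exponentially small complement. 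The missing input is exactly the probabilistic derivative estimate with explicit $\kappa$-dependence---the reverse-flow martingale moment bound and dyadic decomposition of Section~\ref{section_derivative_estimate}, yielding bounds such as (\ref{complement_P})---and the paper's Section~\ref{sect:further-comments} points out that an exponential-tightness proof would require this same machinery. (The paper instead verifies the closed- and open-set bounds directly for finite $T$, via Tran's piecewise-linear approximation, a $\chi^2$ computation, and Girsanov, and then passes to $[0,\infty)$ by Dawson--G\"artner and the contraction principle.) Two smaller points: exponential tightness must produce compact subsets of $S$ itself, and uniform limits of simple capacity-parametrized curves need not be simple, so Arzel\`a--Ascoli alone does not give compacts inside $S$; and the tail bound you call automatic is not ($\mathrm{hcap}(\gamma[0,t])=2t$ forces $\mathrm{diam}\,\gamma[0,t]\gtrsim\sqrt{t}$ but does not bound $|\gamma(t)|$ from below), although no such escape estimate is actually needed for compacts in the topology of compact convergence.
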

	 Although the space $S$ is not complete, it is a natural choice for our setup since, for $\kappa\le4$, SLE$_\kappa$ curves are simple almost surely. 
	 
	 As a side remark, we would like to mention another formulation of the result.  It would have been quite natural to state the theorem using the (slightly weaker) ``strong topology'' on curves modulo reparametrization: We say that the sequence of curves $\left\{\gamma_n\right\}_n$ converges to $\gamma$ in the strong sense if 
\begin{equation*}
\lim\limits_{n\to\infty}d_T\left(\gamma_n, \gamma\right)=0,
\end{equation*}
where the metric is given by 
	 \begin{equation*}
	d_T(\gamma_1, \gamma_2) = \inf\limits_{\varphi}||\gamma_1 - \gamma_2 \circ\varphi||_{\infty, [0,T]},  
	 \end{equation*}
	  the infinum is taken over all increasing homeomorphisms $\varphi: [0,T]\to[0,T]$. Analogously to compact convergence, the strong convergence for chords with infinite capacity is defined by saying that the strong convergence takes places for the restrictions of curves to the initial segments with
	 half-plane capacity $2T$ for every $T>0$. We choose however to use the capacity parametrization in the statement since this is what is actually proved.
	 
	 \section*{Acknowledgments}
	 I would like to thank Fredrik Viklund for helpful discussions and numerous revisions of this work as well as constant motivation in the process. I am grateful to Ellen Krusell for many valuable comments and inspiring conversations. Furthermore, I also want to thank Yilin Wang and Yizheng Yuan who read the draft of this work and provided insightful remarks. The work is supported by the Swedish Research Council (VR), Grant 2019-04152.

\newpage
	\section{Setup and outline of the proof}
	
	M. Schilder showed in 1966 that a family of rescaled Brownian motions $\left\{\sqrt{\varepsilon}B\right\}_{\varepsilon>0}$ satisfies an LDP on the Polish space\footnote{\label{note1} Complete separable metric space.} $\left(C([0,T], \mathbb{R}), ||\cdot||_{\infty}\right)$ with the Dirichlet energy $I_D$ as a rate function. The Dirichlet energy is directly connected with the Loewner energy $I_L$ of a chord $\gamma$ in the upper half-plan driven by $\lambda$. By definition:
	\begin{equation*}
	I_L(\gamma)  = I_D(\lambda).
	\end{equation*} This connection
	gives hope to deduce LDP for SLE from LDP for Brownian motion by means of the contraction principle. In order to carry out this strategy the Loewner map, from a space of driving functions  to the space of curves, has to be continuous. The continuity of the  map depends on the chosen topology on the space of curves. Unfortunately the Loewner map is continuous neither in the Hausdorff topology nor in the uniform topology which makes direct application of the contraction principle impossible. Therefore, the proof requires direct validation of the inequalities for open and closed sets as stated in Definition \ref{definition_of_LDP} above.

The proof of the main result is divided into two parts: the proof for a finite time interval $[0,T]$ and its extension to $[0,\infty)$. The former utilizes analytic results specific to SLE while the latter follows from the general Large Deviations theory for projective limits.

Since the proof for finite time will constitute the bulk of the paper, 
 for the most part we will work with the space of curves run up to finite capacity time:
	
	\begin{equation}\label{definition_S_T}
	\begin{split}
	S_T = \big\{\gamma \in C\left([0,T], \overline{\mathbb{H}}\right):&   \gamma(0)=0, \gamma[0,T] \text{ is a simple curve}, \\
	&\text{ and hcap}(\gamma[0,t]) = 2t \text{ for } t\in [0,T] \big\}.	
	\end{split}
	\end{equation}
Moreover,  it is sufficient to consider the case $T=1$ due to the scale invariance of SLE$_{\kappa}$.

Let $D_T $ be the biggest subset of $ C\left([0,T], \mathbb{R}\right)$ such that every $\lambda \in D_T$ generates a curve under the Loewner transformation $\lambda\to \mathcal{L}(\lambda)$. Throughout the paper we will use the following notation for SLE$_{\kappa}$: $\gamma^\kappa = \mathcal{L}\left(\sqrt{\kappa }B\right)$, where $B$ is the standard Brownian motion, or just $\gamma$ if it is clear that we work with SLE$_\kappa$ from the context.

 The Loewner differential equation for the inverse conformal map $f_t = g_t^{-1}$, which maps $\mathbb{H}$ to a slit domain $ \mathbb{H}\backslash\gamma[0,t] $, is given by

\begin{equation}\label{Loewner_equation_for_f}
\partial_{t}f_t(z) = - f'_t(z)\frac{2}{z - \lambda(t)}, \quad f_0(z) = z.
\end{equation}
Given a real-valued function with appropriate regularity one obtains a growing chord $\left\{\gamma(t)\right\}_{t\in[0,T]}$ via $\gamma(t) = \lim_{y \to 0+} f_t(\lambda_t + iy)$. Above we have denoted by $D_T$ the class of driving functions that generate curves; we stress however that there is currently no known direct characterization of this class and finding one is an interesting open problem. Nevertheless, some sufficient conditions are available. For example, it is known that if a continuous function $\lambda:[0,T]\to\mathbb{R}$  has finite Dirichlet energy, then it generates a simple curve \cite{FrizShekhar}. This case will be of most interest to us. 

Under $f_t$ the point $\lambda_t$  on the real line is mapped to the tip of the curve $\gamma(t)$. If we introduce the centered map $\hat{f}_t(z) := f_t(\lambda_t+z)$, then zero will be the preimage of the tip $\gamma(t)$ under $\hat{f}_t$.  Moreover, in further derivations $f_t$, without any additional indices, will denote the conformal map for SLE$_\kappa$, that is the solution to (\ref{Loewner_equation_for_f}) with $\lambda= \sqrt{\kappa}B$.

The main part of the proof follows the logic of standard arguments to derive sample path large deviations. First, we show that SLE satisfies LDP on $S_T$ in the topology generated by the supremum norm. The general line of reasoning follows ideas used in the proof of Schilder's theorem as presented, for example, in the monograph of  A.V. Bulinski and \mbox{A. N. Shiryaev} \cite{BulinskiShiryaev}. The cornerstone of the SLE specific argument is that, for any $\beta \in (0,1)$ and certain constant $c>0$, the derivative of the SLE map satisfies the inequity $\left|\hat{f}'_t(iy)\right| \le cy^{-\beta}$ uniformly in $t\in [0,T]$ with very high probability if $\kappa$ is small enough.  The reason why this is useful is that the derivative near the preimage of the tip of the curve controls the regularity in the capacity parametrization, as well as various related quantities, see, e.g., \cite{Optimal, ViklundLawler_Acta}.

The proof for closed sets is based on a classic idea of approximating stochastic processes. For example, in the proof of Schilder's theorem Brownian motion $B$ can be approximated by a piece-wise linear function $B_n$ with $n$ nodes. The limit $\lim\limits_{n\to\infty}B_n = B$ a.s.  allows us to work with approximating processes $B_n$ instead of $B$ which makes certain calculations possible that would produce infinite quantities otherwise. Similarly, we are going to approximate the SLE$_\kappa$ curve $\gamma^\kappa$ with a certain processes $\gamma_{n}$. Of course, there could be many candidates for these processes. As was shown in \cite{Tran} if $\gamma_n$ is a Loewner curve driven by appropriate approximation of $\sqrt{\kappa}B$, then almost surely $\lim\limits_{n\to\infty}||\gamma^{\kappa} - \gamma_n||_{\infty} = 0$. Among many possible choices of approximating processes, piece-wise linear approximation of the driving function proved to be the most fruitful. 

An important technical building block in the proof of convergence of $\gamma_n$ to SLE$_\kappa$ is the derivative estimate mentioned above. More specifically, one needs to control the lower bound of the probability of the event
\begin{equation}\label{definition_P}
P_{n} \overset{\mathrm{def}}{=}  \left\{|\hat{f}'_{t}(iy)|\le \psi(n)y^{-\beta} \text{ for } y\in[0, 2^{-n}], t\in[0,1] \right\},
\end{equation} 
for a certain sub-power function (see below) $\psi$ and a free parameter $\beta \in (0,1)$. It will allow us to control the convergence rate of  $||\gamma^{\kappa} - \gamma_n||_{\infty}$, where $\gamma_n  = \mathcal{L}\left(\sqrt{\kappa}B_n\right)$ and $B_n$ is  a piece-wise linear approximation of Brownian motion with $n$ nodes. 

The proof for open sets partly follows the derivation of Schilder's theorem. It is possible to use more or less the same argument due to a result obtained in \cite{Continuity} which allows precise comparing of Loewner maps with close driving functions.  

The extension of the LDP to the positive real line $[0,\infty)$ can be done with tools developed in the  general theory of Large Deviations (see a book of A. Dembo and O. Zeitouni \cite{DemboZeitouni} on the subject). Namely, the Dawson-G$\ddot{\text{a}}$rtner theorem will allow us to extent the result from $S_T$ to the projective limit of the family $\left\{S_T\right\}_{T\in[0,\infty)}$. The projective limit is a product space with the product topology. Here is the place where the topology of uniform convergence on $[0,T]$ becomes a version of the topology of compact convergence on $[0,\infty)$. The product space is just a middle step since, at the end of the day, we are interested in the LDP on $S$. The final step is carried out with a help of the contraction principle; we will see that there is a continuous bijection between the product space and $S$ which makes the use of the contraction principle possible.

	Here is a brief description of how this paper is organized.  In Section \ref{section_rate_function} we show that the Loewner energy satisfies general definition of a rate function from Large Deviations theory; the section ends with one useful property of a rate function.  Next, in Sections \ref{section_derivative_estimate} and \ref{section_convergence_to_SLE} we establish derivative estimates on which the main argument in Section \ref{section_proof} will be build.  Section \ref{section_proof} consists of three parts: proof for finite time, which is subdivided into cases of closed and open sets, and extension to $[0,\infty)$.
	
\section{The Loewner energy as rate function}\label{section_rate_function}
Definition \ref{definition_of_LDP} of LDP features a rate function. Here we give its formal definition, prove that the Loewner energy satisfies this definition and verify one limiting property which will be used later.  
\begin{definition}\label{definition_of_rate_function}
A function $I: S\to [0,\infty]$ is said to be a good rate function if it satisfies the following properties
\begin{enumerate}
\item $I \not\equiv \infty$
\item $I$ is lower semi-continuous 
\item $\left\{\lambda: I(\lambda)\le c \right\}$ is compact for every $c\in[0,\infty)$.
\end{enumerate}
\end{definition}
\noindent In fact, the second property follows from the third one but is stated together due to convention.

One of the motivations for this definition is based on the uniqueness property of a rate function, i.e., if a family of probability measures satisfies LDP, then the associated rate function is unique.  To obtain this property it is enough to require the underlying space $S$ to be regular and the rate function to be lower semi-continuous. Moreover, requiring compactness of the level sets of the rate function gives additional nice properties (hence, \textit{good} in the name).

 Recall that the \textit{Dirichlet energy} of a function $\lambda \in C(\left[0,T\right], \mathbb{R})$  is given by
\begin{equation*}
I_{D}\left(\lambda\right)= \begin{cases}
\frac{1}{2}\int\limits_{0}^{T}\dot{\lambda}(t)^{2}dt, \text{ if } \lambda\text{ is absolutely continuous}\\
\infty, \text{ otherwise}.
\end{cases}
\end{equation*}
In the introduction we have mentioned that the Dirichlet energy serves as the rate function in the LDP for Brownian motion (Schilder's theorem). In fact, it is a \textit{good} rate function in the sense of Definition $\ref{definition_of_rate_function}$.

\begin{lemma}[Lemma 2, §4, appendix 8 of \cite{BulinskiShiryaev}]\label{Lemma_Ditichlet_energy}
The Dirichlet energy is a good rate function on $C(\left[0,T\right], \mathbb{R})$ endowed with the uniform topology.
\end{lemma}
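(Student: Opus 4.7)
The plan is to verify the three defining properties of a good rate function from Definition \ref{definition_of_rate_function}. Property 1 is immediate, since the constant path $\lambda \equiv 0$ is absolutely continuous with $I_D(0) = 0 < \infty$. Since Schilder's theorem concerns paths starting at the origin, I will understand the sublevel sets as sitting inside $C_0([0,T], \mathbb{R}) = \{\lambda \in C([0,T], \mathbb{R}) : \lambda(0) = 0\}$; the lemma is really a statement about this (closed) subspace.

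For property 3 (compactness of sublevel sets), the key tool is Cauchy--Schwarz. If $\lambda$ is absolutely continuous with $I_D(\lambda) \le c$, then for any $0 \le s < t \le T$,
\[
|\lambda(t) - \lambda(s)| = \Bigl|\int_s^t \dot\lambda(u)\,du\Bigr| \le \sqrt{t-s}\,\Bigl(\int_0^T \dot\lambda(u)^2\,du\Bigr)^{1/2} \le \sqrt{2c(t-s)}.
\]
Hence $K_c = \{\lambda \in C_0 : I_D(\lambda) \le c\}$ is uniformly $\tfrac12$-Hölder and, combined with $\lambda(0) = 0$, uniformly bounded. Arzelà--Ascoli yields relative compactness in the uniform topology, and closedness will be supplied by property 2.

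For property 2 (lower semi-continuity), I would write $I_D$ as a supremum of continuous functionals. For any finite partition $\pi = \{0 = t_0 < t_1 < \ldots < t_n = T\}$, define
\[
E_\pi(\lambda) = \frac{1}{2}\sum_{i=1}^n \frac{(\lambda(t_i) - \lambda(t_{i-1}))^2}{t_i - t_{i-1}},
\]
which depends continuously on $\lambda$ in the uniform topology (it is a continuous function of finitely many pointwise values). I would then show
\[
I_D(\lambda) = \sup_{\pi} E_\pi(\lambda),
\]
where the supremum is over all finite partitions. The easy direction, $E_\pi(\lambda) \le I_D(\lambda)$, comes from Jensen's inequality applied on each subinterval when $\lambda$ is absolutely continuous. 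Being a pointwise supremum of continuous functionals, $I_D$ is automatically lower semi-continuous.

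The main obstacle is the reverse inequality $\sup_\pi E_\pi(\lambda) \ge I_D(\lambda)$ when $\lambda$ is not a priori absolutely continuous: one must rule out the possibility that $\sup_\pi E_\pi(\lambda)$ is finite while $I_D(\lambda) = \infty$. The standard argument extracts a weakly convergent subsequence in $L^2([0,T])$ of the piecewise-constant difference quotients on a refining sequence of partitions, identifies the weak limit as the distributional derivative of $\lambda$ (forcing absolute continuity), and then invokes weak lower semi-continuity of the $L^2$ norm to recover the Dirichlet integral. This is a classical real-analysis argument but is the step that takes actual work; the rest of the proof is routine.
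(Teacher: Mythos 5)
Your proposal is correct, but note that the paper does not actually prove this lemma: it is imported verbatim as Lemma 2, \S 4, Appendix 8 of Bulinski--Shiryaev, so there is no in-paper argument to compare against. What you wrote is essentially the standard textbook proof of this fact (the same circle of ideas as in the cited reference and in Dembo--Zeitouni): Cauchy--Schwarz gives the uniform $1/2$-H\"older bound $|\lambda(t)-\lambda(s)|\le\sqrt{2c(t-s)}$ on the sublevel set, Arzel\`a--Ascoli gives relative compactness, and lower semi-continuity comes from the variational representation $I_D(\lambda)=\sup_\pi E_\pi(\lambda)$ as a supremum of functionals continuous in the uniform topology, with the nontrivial direction handled by weak $L^2$ compactness of the piecewise-constant difference quotients and weak lower semi-continuity of the norm. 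Two small remarks. First, your observation about $C_0([0,T],\mathbb{R})$ is a genuine and worthwhile correction to the statement as literally written: on all of $C([0,T],\mathbb{R})$ the constants $\lambda\equiv a$, $a\in\mathbb{R}$, all have zero energy, so the level sets are unbounded and not compact; the lemma is true on the subspace $\{\lambda(0)=0\}$, which is exactly how it is used in the paper, since driving functions of curves with $\gamma(0)=0$ and the driver $\sqrt{\kappa}B$ all start at $0$. Second, your structure is not circular even though closedness of $K_c$ is deferred to property 2, because the lower semi-continuity argument via $\sup_\pi E_\pi$ is independent of compactness; if you flesh out the sketch, the only step requiring real care is identifying the weak limit of the difference quotients with the distributional derivative along a refining sequence of partitions whose union of nodes is dense, which you have correctly isolated as the main work.
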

\noindent The connection between Loewner and Dirichlet energies is given by the following definition.
\begin{definition}
Let $\gamma$ be a Loewner chord in $\overline{\mathbb{H}}$ and $\lambda^{\gamma}$ its driving function. The Loewner energy of $\gamma$ equals the Dirichlet energy of its driving function
\begin{equation*}
I_{L}\left(\gamma\right) = I_{D}\left(\lambda^{\gamma}\right).
\end{equation*} 
\end{definition}

Naturally, one expects the Loewner energy to be a \textit{good} rate function as well. Before proceeding with a proof of this fact we first need to state a couple of preliminary lemmas which will be used repeatedly in this paper. 

\begin{lemma}[Lemma 2.3 in \cite{Continuity}]\label{lemma_continuity}
For $j=1,2$, let $f_t^{(j)}$ satisfy the chordal Loewner equation (\ref{Loewner_equation_for_f}) with a driving function $\lambda_j\in C\left(\left[0,T\right]\right)$. Then for $x+iy \in \mathbb{H}$ the following deterministic inequality holds uniformly in $t\in \left[0,T\right]$
\begin{equation*}
\left|f_t^{(1)}(x+iy) -f_t^{(2)}(x+iy) \right| \le ||\lambda_1 - \lambda_2||_{\infty} \sqrt{1 + \frac{4}{y^2}}.
\end{equation*} 
\end{lemma}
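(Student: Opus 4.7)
My plan is to work directly from the chordal Loewner PDE for the inverse maps $f_t^{(j)}$ and extract an integral representation for the difference $u(t,z) := f_t^{(1)}(z) - f_t^{(2)}(z)$, which vanishes at $t = 0$. Subtracting the two Loewner PDEs and using the simple algebraic identity
\begin{equation*}
\frac{(f_t^{(1)})'(z)}{z - \lambda_1(t)} - \frac{(f_t^{(2)})'(z)}{z - \lambda_2(t)} = \frac{\partial_z u(t,z)}{z - \lambda_1(t)} + (f_t^{(2)})'(z) \cdot \frac{\lambda_2(t) - \lambda_1(t)}{(z - \lambda_1(t))(z - \lambda_2(t))},
\end{equation*}
one obtains the first-order linear PDE
\begin{equation*}
\partial_t u + \frac{2}{z - \lambda_1(t)} \partial_z u = -2 (f_t^{(2)})'(z) \cdot \frac{\lambda_1(t) - \lambda_2(t)}{(z - \lambda_1(t))(z - \lambda_2(t))}, \qquad u(0, z) = 0.
\end{equation*}

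Next I would apply the method of characteristics. The characteristic curves satisfy $\dot z(s) = 2/(z(s) - \lambda_1(s))$ and are precisely the forward Loewner flow $z(s) = g_s^{(1)}(z_0)$; these stay in $\mathbb{H}$ as long as $z_0$ is not swallowed by the $\lambda_1$-hull. The characteristic terminating at $(t,z)$ starts at $z_0 = f_t^{(1)}(z) \in \mathbb{H}$, and along it the PDE becomes an ODE whose RHS is the source term above. Integration from $0$ to $t$ gives the representation
\begin{equation*}
u(t, z) = -2 \int_0^t (f_s^{(2)})'(z(s)) \cdot \frac{\lambda_1(s) - \lambda_2(s)}{(z(s) - \lambda_1(s))(z(s) - \lambda_2(s))} \, ds.
\end{equation*}

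The final task is to estimate this integral. Pulling $\|\lambda_1 - \lambda_2\|_\infty$ out reduces the problem to controlling a purely geometric kernel. Here I would use the Schwarz-type bound $\Im f_s(w) \ge \Im w$ for the half-plane conformal maps, combined with the standard monotonicity estimate $(\Im g_s(z_0))^2 \ge (\Im z_0)^2 - 4s$ for the forward flow, to obtain lower bounds on $|z(s) - \lambda_j(s)|$ in terms of $\Im z_0 \ge y$. These allow the kernel to be estimated, while the factor $\sqrt{1 + 4/y^2} = \sqrt{y^2 + 4}/y$ should emerge from integrating something like $(y^2 + 4s)^{-3/2}$ over $[0, \infty)$.

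The main obstacle is getting a bound that is uniform in $T$. A naive absolute-value/Gr\"onwall argument on either the PDE for $u$ or on the ODE for the difference $g_t^{(1)}(z) - g_t^{(2)}(z)$ of forward flows yields an exponentially growing factor $\exp(CT/y^2)$, which is not sharp and contradicts the $T$-free bound in the statement. The correct estimate must therefore exploit a cancellation: one should recognize the integrand (up to a constant) as an exact, or nearly exact, differential in $s$ along the characteristic, so that the integral telescopes to a bounded quantity of order $1/y$ rather than accumulating. Identifying this telescoping structure is the crucial technical step.
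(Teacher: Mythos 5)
Your integral representation is correct: it is precisely the identity $f_t^{(2)}(z)-f_t^{(1)}(z)=\int_0^t\frac{d}{ds}\bigl[f_s^{(2)}(g_s^{(1)}(w))\bigr]\,ds$ with $w=f_t^{(1)}(z)$, and your characteristics are the forward flow (there is a harmless sign slip in your displayed algebraic identity, $\lambda_2-\lambda_1$ should be $\lambda_1-\lambda_2$, which does not propagate). But the entire content of the lemma is the estimate, and that is exactly the step you leave open. Concretely: (i) the factor $(f_s^{(2)})'(z(s))$ is never controlled; the only soft bound of the right flavour is Schwarz--Pick, $|(f_s^{(2)})'(\zeta)|\le \Im f_s^{(2)}(\zeta)/\Im\zeta$, and feeding in the monotonicity facts you list ($\Im z(s)\ge y$ and $\bigl(\Im f^{(2)}_s\bigr)^2\le(\Im\cdot)^2+4s$) bounds the integrand only by a constant times $\sqrt{y^2+4s}\,/y^{3}$, whose time integral grows like $t^{3/2}$. (ii) The cancellation you hope for cannot be a literal exact differential: the integrand carries the arbitrary continuous factor $\lambda_1(s)-\lambda_2(s)$, and along the characteristic the only available lower bound on $\Im z(s)$ is the constant $y$ (the imaginary part decreases to $y$ as $s\uparrow t$), so no kernel of the form $(y^2+4s)^{-3/2}$ and no convergent integral over $[0,\infty)$ can appear. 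In your picture one of the two flows has been frozen into the constant $w$, which destroys the symmetry that the actual cancellation exploits.

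The argument that closes the gap (the lemma is imported from \cite{Continuity}; the present paper does not reprove it) couples the two \emph{reverse} flows started from the same point: for fixed $t$ let $\partial_s h_s^{(j)}=-2/(h_s^{(j)}-\lambda_j(t-s))$, $h_0^{(j)}=z$, so that $h_t^{(j)}=f_t^{(j)}(z)$. Writing $u=h^{(1)}-h^{(2)}$, $a=h^{(1)}_s-\lambda_1(t-s)$, $b=h^{(2)}_s-\lambda_2(t-s)$, $p=\Im h^{(1)}_s$, $q=\Im h^{(2)}_s$, one gets $\partial_s u=2(u-\delta)/(ab)$ with $\|\delta\|_\infty=\|\lambda_1-\lambda_2\|_\infty$. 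The right quantity is $N=|u|^2/(pq)$: since $\partial_s(pq)=2pq\bigl(|a|^{-2}+|b|^{-2}\bigr)\ge 4pq/(|a||b|)$, the $|u|^2$-terms cancel in $\partial_s N$, leaving $\partial_s\sqrt{N}\le 2\|\delta\|_\infty/\bigl(\sqrt{pq}\,|a||b|\bigr)\le\|\delta\|_\infty\bigl(\tfrac{1}{p|a|^2}+\tfrac{1}{q|b|^2}\bigr)=-\tfrac12\|\delta\|_\infty\,\partial_s\bigl(p^{-1}+q^{-1}\bigr)$, an exact derivative because $2/|a|^2=\partial_s\log p$ and $2/|b|^2=\partial_s\log q$. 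Integrating from $0$ to $t$ gives $\sqrt{N(t)}\le\|\delta\|_\infty/y$, hence $|f_t^{(1)}(z)-f_t^{(2)}(z)|\le\|\lambda_1-\lambda_2\|_\infty\sqrt{p(t)q(t)}/y\le\|\lambda_1-\lambda_2\|_\infty\sqrt{1+4t/y^2}$, using $p^2,q^2\le y^2+4t$. Note that this natural bound carries $4t/y^2$ and coincides with the stated $\sqrt{1+4/y^2}$ for $t\le1$, the regime the paper actually uses; the fully $T$-independent bound you set as your target is stronger than what the standard argument yields and is not needed. So: your representation is fine, but the telescoping lives in the differential inequality for $N$ along the coupled reverse flows, not in your forward-characteristic integrand, and without that comparison argument the proof does not close.
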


\begin{lemma}[Theorem 2 of \cite{FrizShekhar}]\label{FrizShekhar_lemma}
Let $\lambda\in C\left([0,T]\right)$ be a real-valued function with finite Dirichlet energy and $\hat{f}$ be the corresponding centered Loewner map. Then uniformly in $t\in[0,T]$ and $y>0$ 
\begin{equation*}
\log|\hat{f}'_ t(iy)| \le \frac{1}{2}I_{D}(\lambda).
\end{equation*} 
\end{lemma}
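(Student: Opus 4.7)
The plan is to reduce the claim to an ODE computation along the reverse Loewner flow, where the trajectory of interest admits useful monotonicity. For fixed $t\in[0,T]$, introduce $h_s$, $s\in[0,t]$, solving
\[
\partial_s h_s(z) = -\frac{2}{h_s(z) - \lambda_{t-s}}, \qquad h_0(z) = z,
\]
with time-reversed driving $s\mapsto\lambda_{t-s}$. Differentiating $g_{t-s}\circ g_t^{-1}$ in $s$ one checks that $h_t = f_t$, so in particular $h_t'(\lambda_t + iy) = \hat f_t'(iy)$. The point of switching to the reverse flow is that the imaginary part of the naturally centered trajectory $Z_s := h_s(\lambda_t + iy) - \lambda_{t-s}$ turns out to be monotone increasing, producing sign information absent in the forward picture.

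Writing $Z_s = X_s + iY_s$, $R_s = |Z_s|$, and $V_s = h_s'(\lambda_t + iy)$ (so $V_0 = 1$ and $V_t = \hat f_t'(iy)$), the defining ODEs yield
\[
\partial_s Z_s = -\frac{2}{Z_s} + \dot\lambda_{t-s}, \qquad \partial_s \log V_s = \frac{2}{Z_s^2},
\]
from which $\partial_s Y_s = 2Y_s/R_s^2$ and hence $\int_0^t R_s^{-2}\,ds = \tfrac12\log(Y_t/y)$. Computing $\partial_s \log R_s$ from $\partial_s Z_s$ and comparing with $\partial_s \log|V_s| = \mathrm{Re}(2/Z_s^2)$ yields the key identity
\[
\log|\hat f_t'(iy)| \;=\; \log\frac{y}{R_t} \;+\; \int_0^t \dot\lambda_{t-s}\,\frac{X_s}{R_s^2}\,ds.
\]
Since $R_t \ge Y_t \ge y$, the first term is $\le 0$; for the second, $|X_s|\le R_s$ gives $X_s^2/R_s^4 \le 1/R_s^2$, whence $\int_0^t (X_s/R_s^2)^2\,ds \le \tfrac12\log(R_t/y)$. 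Applying Cauchy--Schwarz followed by AM--GM makes the $\log R_t$ contribution cancel the boundary term, leaving $\log|\hat f_t'(iy)| \le \tfrac14 \int_0^t \dot\lambda_u^2\,du \le \tfrac12 I_D(\lambda)$, uniformly in $t$ and $y$.

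The main technical point is the derivation of the key identity: it requires carefully accounting for the time-dependent centering $\lambda_{t-s}$ when differentiating $\log R_s$, so that the integrand $\mathrm{Re}(2/Z_s^2)$ (which is exactly what produces $\log|V_s|$) is paired off against the purely geometric drift of $\log R_s$, leaving only a boundary term and a cross-term involving $\dot\lambda$. Once this identity is in place, the closing Cauchy--Schwarz / AM--GM inequality is routine and uses no feature of $\lambda$ beyond its Dirichlet energy.
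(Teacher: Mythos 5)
Your proof is correct: the reverse-flow identity $\log|\hat f_t'(iy)| = \log(y/R_t) + \int_0^t \dot\lambda_{t-s}\,X_s R_s^{-2}\,ds$, the monotonicity $Y_s \ge y$, and the Cauchy--Schwarz/AM--GM step all check out, and indeed yield $\log|\hat f_t'(iy)| \le \tfrac14\int_0^t \dot\lambda_u^2\,du \le \tfrac12 I_D(\lambda)$ uniformly in $t$ and $y$. The paper does not prove this lemma but imports it as Theorem 2 of \cite{FrizShekhar}, and your argument is essentially the standard one from that reference (reverse flow with time-reversed driver plus the logarithmic-derivative identity), so it matches the intended proof rather than offering a genuinely different route.
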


Now we are in shape to prove the following result on the Loewner energy.
\begin{lemma}
The Loewner energy is a good rate function on $\left(S_T, ||\cdot||_\infty\right)$.
\end{lemma}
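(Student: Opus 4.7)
The plan is to verify the three defining properties in Definition \ref{definition_of_rate_function} for $I_L$. Property (1) is immediate: the vertical half-line from the origin is driven by $\lambda \equiv 0$ and hence has $I_L = 0 < \infty$. Properties (2) and (3) will be obtained simultaneously by proving that every sublevel set $K_c := \{\gamma \in S_T : I_L(\gamma) \le c\}$ is compact. Since $(S_T, \|\cdot\|_\infty)$ is metrizable, compactness of $K_c$ implies closedness, which is equivalent to lower semi-continuity of $I_L$.

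To establish sequential compactness, take $\{\gamma_n\} \subset K_c$ with driving functions $\lambda_n \in C([0,T], \mathbb{R})$; by definition $\lambda_n(0) = 0$ and $I_D(\lambda_n) \le c$. By Lemma \ref{Lemma_Ditichlet_energy}, the $c$-sublevel set of $I_D$ is compact in the uniform topology, so along a subsequence $\lambda_{n_k} \to \lambda$ uniformly with $I_D(\lambda) \le c$. By the Friz--Shekhar theorem \cite{FrizShekhar}, $\lambda$ generates a simple chord $\gamma$, and the hcap$=2t$ parametrization is automatic from the Loewner equation, so $\gamma \in S_T$ with $I_L(\gamma) = I_D(\lambda) \le c$. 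This produces the natural candidate limit in $K_c$.

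The heart of the argument is to verify that $\gamma_{n_k} \to \gamma$ uniformly on $[0,T]$. For any $y > 0$ the triangle inequality gives
\begin{equation*}
|\gamma_{n_k}(t) - \gamma(t)| \le |\hat f^{(n_k)}_t(iy) - \gamma_{n_k}(t)| + |\hat f^{(n_k)}_t(iy) - \hat f_t(iy)| + |\hat f_t(iy) - \gamma(t)|.
\end{equation*}
Since $\gamma(t) = \hat f_t(0)$ and $|\hat f'_t(is)| \le e^{c/2}$ by Lemma \ref{FrizShekhar_lemma}, integration in $s$ bounds the first and third terms by $e^{c/2} y$ uniformly in $t$ and $k$. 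The middle term is the interesting one: writing $\hat f^{(j)}_t(iy) = f^{(j)}_t(\lambda_j(t)+iy)$ and inserting the intermediate value $f^{(n_k)}_t(\lambda(t)+iy)$, Lemma \ref{lemma_continuity} bounds $|f^{(n_k)}_t(\lambda(t)+iy) - f_t(\lambda(t)+iy)|$ by $\|\lambda_{n_k}-\lambda\|_\infty \sqrt{1+4/y^2}$, while a Koebe distortion estimate on the disc $D(iy, y) \subset \mathbb{H}$, combined with the Lemma \ref{FrizShekhar_lemma} bound $|(\hat f^{(n_k)}_t)'(iy)| \le e^{c/2}$, controls $|f^{(n_k)}_t(\lambda_{n_k}(t)+iy) - f^{(n_k)}_t(\lambda(t)+iy)|$ by a constant $C_{y,c}\|\lambda_{n_k}-\lambda\|_\infty$, valid once $\|\lambda_{n_k}-\lambda\|_\infty \le y/2$. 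Given $\varepsilon > 0$, one first chooses $y$ small so that $2 e^{c/2} y < \varepsilon/2$ and then $k$ large so that the middle term falls below $\varepsilon/2$; this yields $\|\gamma_{n_k} - \gamma\|_{\infty, [0,T]} \to 0$ and completes the sequential compactness argument.

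The only genuinely technical step is the off-axis derivative control in the middle term, since Lemma \ref{FrizShekhar_lemma} is stated only on the imaginary axis. This is the main obstacle, and it is overcome by the standard Koebe distortion argument described above, transferring the on-axis bound to a horizontal neighborhood of size comparable to $y$.
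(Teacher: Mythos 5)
Your proposal is correct and follows essentially the same route as the paper's proof: extract a uniformly convergent subsequence of driving functions from the compact Dirichlet sublevel set (Lemma \ref{Lemma_Ditichlet_energy}), and then prove uniform convergence of the curves via the triangle decomposition through $\hat f_t(iy)$, bounding the tip terms by the derivative estimate of Lemma \ref{FrizShekhar_lemma} and the map-difference term by Lemma \ref{lemma_continuity}, finally letting $y\downarrow 0$. The only difference is that you treat the horizontal centering shift in the middle term explicitly, inserting the intermediate point $f^{(n_k)}_t(\lambda(t)+iy)$ and using a Koebe distortion bound, whereas the paper applies Lemma \ref{lemma_continuity} directly to the centered maps; your version is a slightly more careful execution of the same step and changes nothing in the structure of the argument.
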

\begin{proof}

We need to show that for all $c>0$ sets $V_{c} = \left\{\gamma\in S_{T}: I_{L}(\gamma) \le c\right\}$ are compact in $S_{T}$.  The pair $\left(V_c, ||\cdot||_{\infty}\right)$ is a metric space, so compactness is equivalent to sequential compactness.  This being said we proceed by choosing arbitrary sequence of curves  $\left\{\gamma_n\right\}_n$ in $V_c$ and let $\left\{\lambda_n\right\}_n$ be the corresponding driving functions.

There is one-to-one correspondence between curves and driving functions with bounded energies due to the fact that finite energy driving functions generate simple curves (see \cite{FrizShekhar} Theorem 2 or \cite{Wang2019} Proposition 2.1). Hence, the preimage of $V_c$ consists of exactly those driving functions whose Dirichlet energy is less or equal than $c$, i.e.,
\begin{equation*}
\mathcal{L}^{-1}\left(V_{c}\right) = \left\{\lambda \in C\left([0,T], \mathbb{R}\right): I_D(\lambda)\le c \right\}.
\end{equation*}  
By Lemma \ref{Lemma_Ditichlet_energy}, the Dirichlet energy $I_D$ is a good rate function on $\left(C\left([0,T]\right), ||\cdot||_{\infty}\right)$ which, by the definition of a good rate function, implies that the preimage $\mathcal{L}^{-1}\left(V_{c}\right)$ is compact in $C\left([0,T]\right)$. Hence, there exists a subsequence $\left\{n_j\right\}_j$ such that $\left\{\lambda_{n_j}\right\}_j$ converges uniformly to some limiting function $\lambda\in\mathcal{L}^{-1}\left(V_{c}\right)$.

Now we show that corresponding subsequence of curves $\left\{\gamma_{n_j}\right\}_j$ converges uniformly to $\gamma=  \mathcal{L}\left(\lambda\right) \in V_c$. Fix $y>0$ and consider a decomposition
\begin{equation*}
\left|\gamma(t) - \gamma_{n_j}(t)\right| \le \left|\gamma(t)  - \hat{f}_t(iy)\right| +  \left|\gamma_{n_j}(t)  - \hat{f}^{({n_j})}_t(iy)\right| + \left|\hat{f}_t(iy)- \hat{f}^{({n_j})}_t(iy)\right|.
\end{equation*}
The first two terms are bounded with a help of Lemma \ref{FrizShekhar_lemma} 
\begin{equation*}
\left|\gamma(t)  - \hat{f}_t(iy)\right| +  \left|\gamma_{n_j}(t)  -  \hat{f}^{({n_j})}_t(iy)\right|  \le \int\limits_{0}^{y} \left(\left|\hat{f}'_t(ir)\right|dr + \left|(\hat{f}^{({n_j})}_t)'(ir)\right|\right)dr \le 2y e^{c/2}.
\end{equation*}
The third term is bounded by Lemma \ref{lemma_continuity} 
\begin{equation*}
\left|\hat{f}_t(iy)- \hat{f}^{({n_j})}_t(iy)\right| \le ||\lambda - \lambda_{n_j}||_{\infty}\sqrt{1 + \frac{4}{y^2}}.
\end{equation*}
Hence, for a fixed $y>0$ and uniformly in $t$ we have
\begin{equation*}
\lim\limits_{j\to\infty} \left|\hat{f}_t(iy)- \hat{f}^{({n_j})}_t(iy)\right|  = 0.
\end{equation*}
Put together it yields a bound 
\begin{equation*}
\lim\limits_{j\to\infty}||\gamma-\gamma_{n_j}||_{\infty} \le 2ye^{c/2}.
\end{equation*}
Since the choice of $y>0$ was arbitrary we conclude 
\begin{equation*}
\lim\limits_{j\to\infty}||\gamma-\gamma_{n_j}||_{\infty}  = 0.
\end{equation*}
To sum up, any sequence of curves $\left\{\gamma_n\right\}_n$ in $\left(V_c, ||\cdot||_{\infty}\right)$ contains a subsequence that converges to a limit in $V_c$. Hence, the set $V_c$ is compact in $S_T$.
\end{proof}

Once again, the Loewner energy $I_{L}\left(\gamma\right)$ of a curve by definition equals the Dirichlet energy $I_D\left(\lambda^{\gamma}\right)$ of its driving function, so from now on we drop  subscripts and distinguish both energies by their arguments. In what follows the convention $I(V) = \inf\limits_{x\in V}I(x)$ is adopted.

In the beginning of this section we mentioned that requiring compactness of the sub-level sets of a rate function provides additional nice properties. One of them is the following lemma which will be used in the proof of LDP for closed sets.
\begin{lemma}[Lemma 1, §3, Appendix 8 of \cite{BulinskiShiryaev}]\label{closed_sets_rate_function}
If I is a good rate function and $F$ is a closed subset of $S_{T}$, then the following limit holds
\begin{equation}
\lim\limits_{\delta\downarrow 0}I\left(F^{\delta}\right)  = I\left(F\right),
\end{equation}
where $F^{\delta}  = \left\{\gamma\in S_T: \exists \tilde{\gamma}\in F, ||\gamma-\tilde{\gamma}||_{\infty}\le \delta\right\}$ is the $\delta$-neighborhood of the set $F$.
\end{lemma}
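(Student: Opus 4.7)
The plan is to establish the equality $\lim_{\delta\downarrow 0} I(F^\delta)=I(F)$ via two matching inequalities. The easy direction is the inclusion $F\subset F^\delta$ valid for every $\delta>0$, which yields $I(F^\delta)\le I(F)$ immediately from the definition $I(V)=\inf_{\gamma\in V} I(\gamma)$. Since $F^\delta$ is nonincreasing in $\delta$, the map $\delta\mapsto I(F^\delta)$ is monotone nondecreasing as $\delta\downarrow 0$, so the limit $L := \lim_{\delta\downarrow 0}I(F^\delta)=\sup_{\delta>0}I(F^\delta)$ exists in $[0,\infty]$ and satisfies $L\le I(F)$.

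For the reverse inequality $L\ge I(F)$, which is the content of the lemma, I would use a standard compactness-extraction argument. If $L=\infty$ there is nothing to prove, so assume $L<\infty$. For each $n\in\mathbb{N}$ pick a near-minimizer $\gamma_n\in F^{1/n}$ with $I(\gamma_n)\le I(F^{1/n})+1/n\le L+1$. The crucial input is clause 3 of Definition \ref{definition_of_rate_function}: the sublevel set $\{\gamma\in S_T:I(\gamma)\le L+1\}$ is compact in $(S_T,\|\cdot\|_\infty)$, so the sequence $\{\gamma_n\}$ admits a subsequence $\gamma_{n_k}$ converging uniformly on $[0,T]$ to some $\gamma^\ast\in S_T$.

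It then remains to verify that $\gamma^\ast\in F$ and $I(\gamma^\ast)\le L$. For the first point, the definition of the neighborhood $F^{1/n_k}$ produces $\tilde\gamma_k\in F$ with $\|\gamma_{n_k}-\tilde\gamma_k\|_\infty\le 1/n_k$; the triangle inequality gives $\tilde\gamma_k\to\gamma^\ast$, and closedness of $F$ forces $\gamma^\ast\in F$. For the second, lower semicontinuity of $I$, which is clause 2 of Definition \ref{definition_of_rate_function}, yields $I(\gamma^\ast)\le\liminf_k I(\gamma_{n_k})\le L$. Consequently $I(F)\le I(\gamma^\ast)\le L$, which matches the other inequality and closes the argument.

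The main obstacle is scarcely an obstacle: the proof is a direct translation of the good-rate-function axioms into a Bolzano–Weierstrass-style extraction, and nothing SLE-specific is needed beyond the fact that $(S_T,\|\cdot\|_\infty)$ is a metric space so that uniform limits are unique. The only subtlety worth flagging is to handle the case $L=\infty$ separately and to choose a uniform energy budget (here $L+1$) for the near-minimizers $\gamma_n$, so that all of them lie in a single compact sublevel set; without such a cap, the compactness of individual sublevel sets does not directly produce a convergent subsequence of $\{\gamma_n\}$.
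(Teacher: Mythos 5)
Your proof is correct and follows essentially the same route as the paper: near-minimizers $\gamma_n\in F^{1/n}$ with a uniform energy cap, compactness of a sublevel set to extract a convergent subsequence, closedness of $F$ to place the limit in $F$, and lower semicontinuity to conclude $I(F)\le\lim_{\delta\downarrow 0}I(F^{\delta})$. The only (cosmetic) difference is that you split cases on the limit $L$ being infinite rather than on $I(F)=\infty$, which if anything handles the degenerate case more cleanly than the paper's brief contradiction argument.
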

\begin{proof}
Recall that by our convention the energy of a set is given by the infinum over all curves inside the set
\begin{equation*}
I(F^\delta) = \inf\limits_{\gamma\in F^\delta}I(\gamma).
\end{equation*}
   First assume that $I(F)<\infty$. The family  $\left\{F^\delta\right\}_{\delta>0}$ is nested, so the sequence of real numbers $\left\{I(F^\delta)\right\}_{\delta>0}$ is increasing (as $\delta \downarrow 0$) and bounded above by $I(F)$. By monotone convergence there exits a bounded limit $$\lim\limits_{\delta\downarrow 0} I\left(F^\delta\right)\le I(F).$$

The main objective is to show the opposite inequality. The lower bound is guaranteed by compactness of sub-level sets and can be obtained in the following way. Pick a sequence $\left\{\gamma_n\right\}_{n\in \mathbb{N}}$ that has uniformly bounded energies. It can be done due to an observation 

\begin{equation*}
	\forall n\in\mathbb{N} \ \exists  \gamma_n \in F^{1/n} : I(\gamma_n) \le I\left(F^{1/n}\right) + \frac{1}{n } \le I(F) +  1.
\end{equation*}
The assumption that $I$ is a good rate function implies that its sub-level sets are compact, i.e., for all $c>0$ the sets $\left\{\gamma: I(\gamma)\le c\right\}$  are compact. Hence, the sequence  $\left\{\gamma_n\right\}_{n\in\mathbb{N}}$ we have picked belongs to some compact in $S_T$ and converges along a subsequence $\{n_{j}\}_{j}$ to some $\gamma\in S_T$. In fact, $\gamma \in F$. Since every $\gamma_{n_j}\in F^{1/n_j}$, where $\left\{F^{1/n_j}\right\}_{j\in\mathbb{N}}$ is a family of nested closed sets, the limiting curve belongs to their intersection which coincides with $F$ since the latter is closed
\begin{equation*}
\gamma \in \bigcap\limits_{j=1}^{\infty} F^{1/n_j} = F.
\end{equation*}
The lower semi-continuity of $I$ implies
\begin{equation*}
\lowerlim\limits_{j\to\infty}I(\gamma_{n_{j}})	\ge I(\gamma)
\end{equation*}
which in turn yields the following chain of inequalities
\begin{equation*}
I(F)\le I(\gamma) \le \lowerlim\limits_{j\to\infty}I(\gamma_{n_{j}})	 \le \lowerlim\limits_{j\to\infty}\left( I\left(F^{1/n_j}\right) + \frac{1}{n_j}\right)= \lowerlim\limits_{\delta\downarrow 0} I\left(F^{\delta}\right).		
\end{equation*}
Note that the last equality holds due to existence of the limit, so it does not matter along what subsequence we choose to take that limit. 
Therefore
\begin{equation*}
	I(F)\le \lowerlim\limits_{\delta\downarrow 0}I\left(F^{\delta}\right)
\le\uplim\limits_{\delta\downarrow 0} I\left(F^{\delta}\right)\le I(F).
\end{equation*}
Now let $I(F)=\infty$. Assume that $\left\{I\left(F^{\delta}\right)\right\}_{\delta>0}$  does not tend to $\infty$ as $\delta \downarrow 0$. Following the same logic as above we arrive at contribution.

\end{proof}

\section{Derivative estimate}\label{section_derivative_estimate}
This section shows that for any $\beta\in (0,1)$ the derivative of the inverse Loewner map $\hat{f}_{t}(z) = f_{t}(\sqrt{\kappa}B_{t} + z)$ satisfies a bound of the form $|\hat{f}'_{t}\left(iy\right)| \le c y^{-\beta}$ uniformly  in $t\in\left[0,1\right]$ with very high probability if $\kappa$ is sufficiently small, and where $c$ does not depend on $\kappa$. Throughout we need to be careful to keep track of the $\kappa$-dependence of constants.

\subsection{Moment estimate}
To begin with, we would like to show that for some $p>0$, that depends on $\kappa$, the expectation $\mathbb{E}\left[|\hat{f}'_{t}(iy)|^p\right]$ is bounded uniformly in $t\in\left[0,1\right]$ and $y>0$ .
For this purpose the reverse Loewner flow will be especially convenient.  Let $h_{t}(z)$ be the reverse SLE flow, i.e., the solution to 
\begin{equation*}
\dot{h}_{t}(z) = \frac{-2}{h_t(z)-\sqrt{\kappa}B_t},\quad h_{0}(z) = z.
\end{equation*}
Then for any fixed $t>0$, the function $z\mapsto h _{t}(z) $ has the same distribution as that of $z\mapsto \hat{f}_{t}(z)$ (Lemma 5.5 in \cite{Antti}). In particular $h'_{t}(z)$ and $\hat{f}'_{t}(z)$ have the same law, hence
\begin{equation*}
\mathbb{E}\left[|\hat{f}'_{t}(iy)|^p\right] = \mathbb{E}\left[|h'_{t}(iy)|^p\right].
\end{equation*}

The derivation of the upper bound is based on a certain martingale for the reverse flow, see \cite{Optimal}. For this let us introduce a little bit of notation: set $Z_t = h_t - \sqrt{\kappa}B_t$ and $Y_t = \text{Im } h_t$.
\begin{lemma}[Theorem 5.5 in \cite{Antti}]\label{lemma_martimagle}
The stochastic process $\left\{M_t\right\}_{t\ge 0}$ given by
\begin{equation*}
M_t= |h_{t}'\left(z\right)|^p Y_{t}^{p-\frac{\kappa r}{2}}\left(\sin\arg Z_{t}\right)^{-2r}
\end{equation*}
is a local martingale if $p$ and $r$ are the locus of $r^2 - \left(1+\frac{4}{\kappa}\right)r+\frac{2}{\kappa}p=0$. 
\end{lemma}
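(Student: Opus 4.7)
I would prove this by a direct Itô calculation. First, since $\sin\arg Z_t = Y_t/|Z_t|$, the candidate martingale rewrites as
\begin{equation*}
M_t = |h_t'(z)|^p \, Y_t^{\,p - \kappa r/2 - 2r} \, |Z_t|^{\,2r},
\end{equation*}
where I write $Z_t = X_t + iY_t$ with $X_t = \text{Re}\, h_t(z) - \sqrt{\kappa}B_t$ and $Y_t = \text{Im}\, h_t(z)$. The reverse flow gives $dX_t = -2X_t|Z_t|^{-2}dt - \sqrt{\kappa}\,dB_t$ and $dY_t = 2Y_t|Z_t|^{-2}dt$, while differentiating the flow in $z$ yields the ODE $\partial_t h_t'(z) = 2h_t'(z)/Z_t^2$, so $t\mapsto h_t'(z)$ has no Brownian part.

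The plan is to compute $d\log M_t$ by summing the three contributions. The first two are pure drift:
\begin{equation*}
d\log |h_t'(z)|^p = \frac{2p(X_t^2 - Y_t^2)}{|Z_t|^4}\,dt, \qquad d\log Y_t^{\,p - \kappa r/2 - 2r} = \frac{2(p - \kappa r/2 - 2r)}{|Z_t|^2}\,dt.
\end{equation*}
For $d\log|Z_t|^{2r}$ I would first compute $d|Z_t|^2$ from the dynamics above, apply Itô to get $d\log|Z_t|$, and multiply by $2r$; its Brownian part equals $-2rX_t\sqrt{\kappa}|Z_t|^{-2}dB_t$. Passing from $d\log M_t$ to $dM_t/M_t$ adds the Itô correction $\tfrac{1}{2}(d\log M_t)^2 = 2r^2\kappa X_t^2|Z_t|^{-4}dt$.

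Finally, I would gather the drift of $dM_t/M_t$, use the identity $|Z_t|^{-2} = (X_t^2+Y_t^2)|Z_t|^{-4}$ to put everything over $|Z_t|^4$, and verify that the coefficient of $Y_t^2|Z_t|^{-4}$ cancels identically, leaving a pure $X_t^2|Z_t|^{-4}$ contribution with coefficient proportional to $\kappa r^2 - \kappa r + 2p - 4r$. Setting this to zero and dividing by $\kappa$ produces exactly the quadratic $r^2 - (1 + 4/\kappa)r + 2p/\kappa = 0$, which establishes the local martingale property. The main obstacle is the bookkeeping for the two Itô corrections (one inside $d\log|Z_t|^{2r}$, one from $(d\log M_t)^2$) and ensuring the $Y_t^2$ terms cancel against the drift part of $d\log Y_t^{\,p-\kappa r/2 - 2r}$ and the $-Y_t^2$ coming from $d\log|h_t'|^p$; if the exponents of $Y_t$ and $|Z_t|$ in $M_t$ are not chosen precisely as above, this cancellation fails and one does not obtain a local martingale.
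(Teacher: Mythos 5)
Your Itô computation is correct, and in fact the paper offers no proof to compare it with: the lemma is quoted directly from the cited reference (Theorem 5.5 in \cite{Antti}), so your direct calculation with the reverse flow is precisely the standard argument behind that citation. I checked the bookkeeping. With $dX_t=-2X_t|Z_t|^{-2}dt-\sqrt{\kappa}\,dB_t$, $dY_t=2Y_t|Z_t|^{-2}dt$, and $\partial_t\log|h_t'(z)|=2(X_t^2-Y_t^2)|Z_t|^{-4}$ (note $h_t'(z)$ solves the pathwise ODE $\partial_t h_t'=2h_t'/Z_t^2$, so it carries no martingale part, as you say), one gets
\[
\frac{dM_t}{M_t}=2\left(\kappa r^2-\kappa r+2p-4r\right)\frac{X_t^2}{|Z_t|^4}\,dt-\frac{2r\sqrt{\kappa}\,X_t}{|Z_t|^2}\,dB_t,
\]
after using $|Z_t|^{-2}=(X_t^2+Y_t^2)|Z_t|^{-4}$; the $Y_t^2|Z_t|^{-4}$ contributions cancel identically, independently of $p$ and $r$, exactly as you predicted, and the two Itô corrections (inside $d\log|Z_t|^{2r}$, giving $-2\kappa r X_t^2|Z_t|^{-4}dt$, and from exponentiating $\log M_t$, giving $+2\kappa r^2X_t^2|Z_t|^{-4}dt$) enter with the coefficients you state. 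Vanishing of the drift is then equivalent, after dividing by $\kappa$, to $r^2-(1+4/\kappa)r+2p/\kappa=0$, which is the locus in the lemma. The only cosmetic point worth adding in a written-up version is the remark that along the reverse flow $Y_t$ is increasing from $y>0$ and $Z_t$ never hits $0$, so all logarithms and the rewriting $(\sin\arg Z_t)^{-2r}=Y_t^{-2r}|Z_t|^{2r}$ are legitimate for all $t\ge 0$, and $M_t$ is indeed a local martingale (a stochastic exponential), not merely formally drift-free.
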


The reverse Loewner flow implies that $Y_{t}\ge y $ and trivially $\left(\sin\arg Z_{t}\right)^{-2r} \ge 1$ for $r\ge0$. Consequently, for any $t\ge0$ the local martingale $M_{t}$ is bounded from below
\begin{equation*}
M_t= |h_{t}'\left(z\right)|^p Y_{t}^{p-\frac{\kappa r}{2}}\left(\sin\arg Z_{t}\right)^{-2r} \ge 0,
\end{equation*}
hence it is supermartingale (see Chapter 7 of \cite{Oksendal}). Now we can use the supermartingale property of $M_{t}$ to deduce 
\begin{equation*}
\mathbb{E}\left[|h_{t}'\left(z\right)|^p \right] \le  \mathbb{E}\left[|h_{t}'\left(z\right)|^p \left(\frac{Y_{t}}{y}\right)^{p-\frac{\kappa r}{2}}\left(\sin\arg Z_{t}\right)^{-2r} \right] \le \left(\frac{y}{|z|}\right)^{-2r}.
\end{equation*}
Note that in the  first inequality we have used the aforementioned properties
\begin{equation*}
1 \le \frac{Y_t}{y} \text{ and } 1\le \left(\sin\arg Z_{t}\right)^{-2r},
\end{equation*}
which yield the restrictions $r\ge0$ and $p-\frac{\kappa r}{2}\ge0$.  Together with the quadratic equation in Lemma \ref{lemma_martimagle}, they provide admissible values of $p$, of which the largest possible one is given by 
\begin{equation*}
p_0 = \begin{cases}
1 + \frac{2}{\kappa} + \frac{\kappa}{8}, \ \kappa<4,\\
2, \ \kappa \ge 4.
\end{cases}
\end{equation*} 
Of course, one can pick any $p$ between zero and the largest value in order for the moment estimate to hold. Since we are interested in small $\kappa$, we choose $p=\frac{2}{\kappa}$ to simplify further expressions. Therefore, we will utilize the following bound
\begin{equation}\label{expectation_bound}
\mathbb{E}\left[|\hat{f}_{t}'\left(iy\right)|^{2/\kappa} \right]  = \mathbb{E}\left[|h_{t}'\left(iy\right)|^{2/\kappa} \right] \le 1.
\end{equation}
\subsection{Dyadic decomposition}\label{kappa_dependence}
In this subsection we derive an upper bound on the probability of the complement of the event
\begin{equation*}
\left\{|\hat{f}'_{t}(iy)| < c y^{-\beta}, y\in \left[0,2^{-n}\right], t\in\left[0,1\right]\right\},
\end{equation*}
where $\hat{f}_{t}\left(z\right) = f_{t}\left(\sqrt{\kappa}B_{t}+z\right)$; $c$ and $\beta \in \left(0,1\right)$  are $\kappa$-independent parameters.

First, recall the Koebe distortion theorem. 
\begin{lemma}[Koebe distortion theorem]\label{Koebe}
Let $f:D\to\mathbb{C}$ be a conformal map from a simply connected region $D$ and set $d=\text{dist}\left(z, \partial D\right)$ for $z\in D$. Then for $r\in \left[0,1\right)$
\begin{equation*}
\frac{1-r}{\left(1+r\right)^3}|f'\left(z\right)| \le |f'\left(w\right)| \le \frac{1+r}{\left(1-r\right)^3}|f'\left(z\right)|, \quad |z-w|\le rd.
\end{equation*}
\end{lemma}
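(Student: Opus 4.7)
The plan is to reduce this to the classical Koebe distortion theorem on the unit disc by precomposing with a suitable affine map. Recall the standard normalized statement: if $F:\mathbb{D}\to\mathbb{C}$ is a univalent (injective holomorphic) function with $F(0)=0$ and $F'(0)=1$, then for $|\zeta|=\rho<1$ one has
\[ \frac{1-\rho}{(1+\rho)^3} \le |F'(\zeta)| \le \frac{1+\rho}{(1-\rho)^3}. \]
This is standard in the theory of univalent functions, typically deduced from Bieberbach's bound on the second Taylor coefficient together with the Koebe one-quarter theorem.

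To reduce the general simply connected case to this normalized disc case, observe that because $d=\mathrm{dist}(z,\partial D)$, the open disc $B(z,d)$ lies inside $D$, so the affine map $\psi(\zeta)=z+d\zeta$ sends $\mathbb{D}$ univalently into $D$. The composition $g=f\circ\psi:\mathbb{D}\to\mathbb{C}$ is univalent, and after normalizing by
\[ F(\zeta) = \frac{g(\zeta)-g(0)}{g'(0)} = \frac{f(z+d\zeta)-f(z)}{d\,f'(z)}, \]
we obtain a univalent $F$ on $\mathbb{D}$ with $F(0)=0$, $F'(0)=1$, and $F'(\zeta)=f'(z+d\zeta)/f'(z)$.

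Applying the classical bound to $F$ at a point $\zeta$ with $|\zeta|\le r<1$ and then substituting $w=z+d\zeta$, so that $|w-z|\le rd$, immediately gives
\[ \frac{1-r}{(1+r)^3} \le \frac{|f'(w)|}{|f'(z)|} \le \frac{1+r}{(1-r)^3}, \]
and multiplying through by $|f'(z)|$ yields the claim. The extension from $|\zeta|=\rho$ to the closed condition $|\zeta|\le r$ follows by applying the inequality at each $\rho\in[0,r]$ (or by continuity in $\rho$).

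There is essentially no serious obstacle here; the only conceptual step is the affine reduction $\psi(\zeta)=z+d\zeta$, which is available precisely because the distance to the boundary gives an inscribed disc on which the normalization is valid. Once the classical disc version is granted, the general simply connected version is a direct corollary, and the proof does not require any SLE-specific input.
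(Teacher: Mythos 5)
Your proposal is correct: the affine rescaling $\psi(\zeta)=z+d\zeta$ onto the inscribed disc $B(z,d)$, followed by the normalization $F(\zeta)=\bigl(f(z+d\zeta)-f(z)\bigr)/\bigl(d\,f'(z)\bigr)$ and an application of the classical growth/distortion bounds for univalent functions on $\mathbb{D}$, is the standard way to pass from the normalized disc statement to the version for a general simply connected domain, and the computation $F'(\zeta)=f'(z+d\zeta)/f'(z)$ is right. The paper itself does not prove this lemma at all — it simply recalls the Koebe distortion theorem as a classical fact — so there is nothing to compare against beyond noting that your argument is the usual textbook derivation. The only step worth making explicit is the passage from $|\zeta|=\rho$ to $|\zeta|\le r$: it follows because $\rho\mapsto(1-\rho)/(1+\rho)^3$ is decreasing and $\rho\mapsto(1+\rho)/(1-\rho)^3$ is increasing, so the bounds at $\rho=|\zeta|$ are weaker than the ones stated with $r$.
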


The following proposition originates from \cite{Optimal} but is adapted to our setting  since we need to know how different constants depend on $\kappa$ explicitly. In what follows we use the notation $j=\overline{1,n}$ to denote $j$ ranging over the set $ \left\{1, ..., n\right\}$.

\begin{proposition}[Dyadic decomposition]\label{dyadic_decomposition}
Let $f$ be the inverse Loewner flow driven by $\lambda$. Fix  $\beta \in \left(0,1\right)$ and $n\in \mathbb{N}$. If for any integer $m\ge n$ and $j=\overline{1,2^{2m}}$
\begin{equation*}
|\hat{f}'_{j/2^{2m}}\left(i2^{-m}\right)| \le 2 ^{\beta m},
\end{equation*}	
then for any $y \in \left[0, 2^{-n}\right]$ and $t\in[0,1]$
\begin{equation}\label{definition_of_E}
|\hat{f}'_{t}\left(iy\right)|\le Q(p(t,y))y^{-\beta},
\end{equation}	
where $Q(x) = c_1 \left(1+ x^2\right)^{c_2}$, $p(t,y) = \frac{1}{y}\sup\limits_{s\in[0,y^2]}\left|\lambda(t+s)-\lambda(t)\right|$ and $c_1, c_2$ are universal ($\kappa$-independent) constants.
\end{proposition}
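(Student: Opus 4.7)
The plan is to relate $|\hat f'_t(iy)|$ at a generic $(t,y)$ to the derivative at a nearby dyadic pair supplied by the hypothesis, by factoring the Loewner map through a short-time Loewner evolution and then applying Koebe's distortion theorem (Lemma~\ref{Koebe}) to the univalent map $f_t$.

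\textbf{Dyadic selection.} Given $t \in [0,1]$ and $y \in (0,2^{-n}]$, I would pick $m \ge n$ so that $2^{-m}$ is a fixed small fraction of $y$, e.g.\ $y/4 \le 2^{-m} \le y/2$, and let $t_0 = j/2^{2m}$ be the smallest dyadic of this form with $t_0 \ge t$. Then $\tau := t_0 - t \le 2^{-2m} \le y^2/4$, and the hypothesis gives $|\hat f'_{t_0}(i 2^{-m})| \le 2^{\beta m} \le c\, y^{-\beta}$ for an absolute constant $c$.

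\textbf{Semigroup decomposition.} Using the semigroup identity $f_{t_0} = f_t \circ \tilde f_\tau$, where $\tilde f_\sigma$ denotes the inverse Loewner flow driven by $\lambda(t+\cdot)$ on $[0,\sigma]$, I write
\begin{equation*}
\hat f'_{t_0}(i 2^{-m}) = f'_t(w) \cdot \tilde f'_\tau(\lambda(t_0) + i 2^{-m}), \qquad w := \tilde f_\tau(\lambda(t_0) + i 2^{-m}).
\end{equation*}
The oscillation of $\lambda(t+\cdot)$ on $[0,\tau]$ is at most $p(t,y)\,y$ by definition of $p$, and the capacity estimate $\mathrm{Im}\,g_\sigma(z)^2 \ge \mathrm{Im}(z)^2 - 4\sigma$ together with $\tau \le y^2/4$ keeps the relevant imaginary parts comparable to $y$. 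Integrating the Loewner ODEs for $\tilde f_\sigma$ and for $\tilde f'_\sigma$ then yields universal bounds $|w - (\lambda(t_0) + i 2^{-m})| \le C\cdot 2^{-m}$ and $|\tilde f'_\tau(\lambda(t_0) + i 2^{-m})| \in [C^{-1}, C]$. Consequently $|w - (\lambda(t) + iy)| \le C'(1 + p(t,y))\, y$ while $\mathrm{Im}(w)$ and $y$ remain of the same order.

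\textbf{Distortion transfer.} The map $f_t$ is univalent on $\mathbb{H}$, so by iterating Koebe's inequality along a hyperbolic geodesic in $\mathbb{H}$ connecting $\lambda(t)+iy$ and $w$, the ratio $|f'_t(\lambda(t)+iy)|/|f'_t(w)|$ is bounded by $\exp(C\, d_{\mathbb{H}}(\lambda(t)+iy, w))$ for a universal constant $C$. The distance is at most $\mathrm{arcosh}(1 + C(1 + p(t,y))^2) \lesssim \log(1 + p(t,y))$, giving
\begin{equation*}
|\hat f'_t(iy)| = |f'_t(\lambda(t) + iy)| \le c_1 (1 + p(t,y)^2)^{c_2}\, |f'_t(w)|
\end{equation*}
with universal $c_1, c_2$. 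Combining with the semigroup identity, the two-sided bound on $|\tilde f'_\tau|$, and $|\hat f'_{t_0}(i 2^{-m})| \le c\, y^{-\beta}$ yields the stated inequality.

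\textbf{Main obstacle.} The technical heart of the argument is the distortion transfer. A single application of Lemma~\ref{Koebe} only suffices when $|w - (\lambda(t) + iy)|$ is at most a fixed fraction of $y$, which fails for large $p(t,y)$. The fix is to chain Koebe along the hyperbolic geodesic in $O(\log(1+p(t,y)))$ steps of controlled ratio, equivalently to invoke the standard fact that $\log|f'|$ is Lipschitz with respect to the hyperbolic metric on $\mathbb{H}$ for univalent $f$; this is what turns the naive exponential-in-$p$ estimate into the polynomial bound $c_1(1 + p^2)^{c_2}$. All constants are tracked to be $\kappa$-independent since the argument is purely deterministic in $\lambda$.
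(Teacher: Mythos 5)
Your proof is correct, but it takes a genuinely different route from the paper's. The paper fixes the dyadic rectangle containing $(t,y)$ and passes from $(t,y)$ to its corner $(j/2^{2m},2^{-m})$, with $j/2^{2m}\le t$, in three explicit steps: a vertical Koebe estimate, a comparison of $f'_t$ and $f'_{j/2^{2m}}$ at the \emph{same} point obtained by differentiating the Loewner equation in $z$ and using de Branges' bound on the second coefficient to get $\partial_t\log|f'_t|\le 10/y^2$, and a horizontal comparison at the \emph{same} time handled by a chained Koebe estimate in a rectangle (Lemma~\ref{distortion_in_S} in the appendix); the product of the three factors is exactly $Q(p)$. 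You instead choose a dyadic time $t_0\ge t$ with $t_0-t\le 2^{-2m}\le y^2/4$, factor $f_{t_0}=f_t\circ\tilde f_\tau$ through the short flow driven by $\lambda(t+\cdot)$, control that short piece by elementary forward-flow estimates (imaginary parts stay of order $y$, displacement $O(2^{-m})$, $\log|\tilde f'_\tau|=O(1)$), and then absorb the entire spatial displacement, vertical and horizontal at once, into the standard fact that $\log|f'|$ is Lipschitz with respect to the hyperbolic metric on $\mathbb{H}$, so a hyperbolic distance of order $\log(1+p)$ produces the polynomial factor $c_1(1+p^2)^{c_2}$. The content is essentially equivalent — your hyperbolic-Lipschitz step is the same chained-Koebe mechanism as the appendix lemma, and your short-flow bounds replace the paper's $\partial_t f'_t$ estimate — but your packaging is more conceptual and avoids a separate rectangle-distortion lemma, whereas the paper's version yields explicit constants such as $12e^{10}\tilde c_1$ and $\tilde c_2/2$. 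Two minor bookkeeping points: when $t$ is near $0$ take $j=1$ (the hypothesis starts at $j=1$), and with your normalization $y/4\le 2^{-m}\le y/2$ the hypothesis gives $2^{\beta m}\le 4^{\beta}y^{-\beta}$, the extra constant being absorbed into $c_1$; both are harmless and all constants remain $\kappa$-independent since the argument is deterministic in $\lambda$.
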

\begin{proof}

Consider a dyadic decomposition of the unit square in the $\left(t,y\right)$-plane into the union of rectangles  $$S_{m,j} = \left\{(t,y): y\in \left[2^{-m}, 2^{-(m+1)}\right), t\in \left[ j 2^{-2m}, (j+1)2^{-2m}\right) \right\}.$$

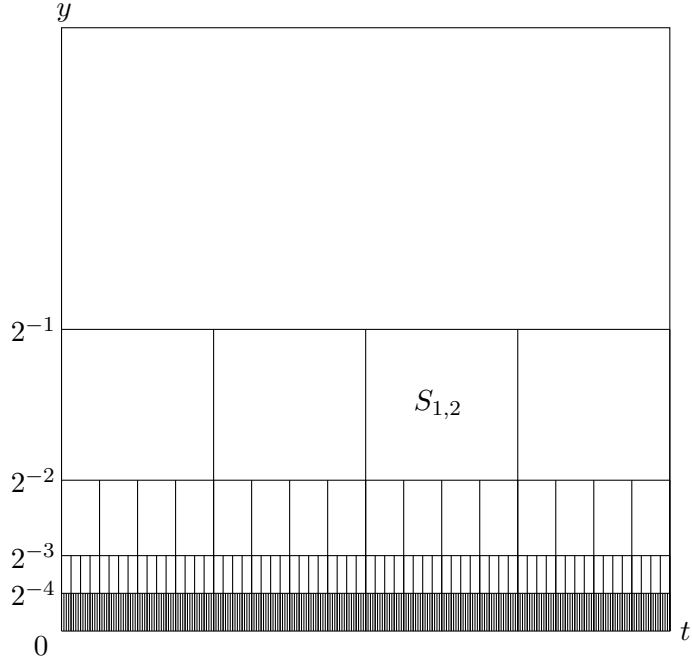
\begin{figure}[H]
\begin{tikzpicture}
\draw (0,0) -- (8,0) -- (8,8) -- (0,8) -- (0,0);
\draw (0,4) -- (8,4);
\draw (0,2) -- (8,2);
\draw (0,1) -- (8,1);
\draw (0,0.5) -- (8,0.5);

\foreach \s in {1,...,4}
{
\draw (8*\s/4,0) -- (8*\s/4,8/2);
}

\foreach \s in {1,...,16}
{
\draw (8*\s/16,0) -- (8*\s/16,8/4);
}

\foreach \s in {1,...,64}
{
\draw (8*\s/64,0) -- (8*\s/64,8/8);
}
\foreach \s in {1,...,256}
{
\draw (8*\s/256,0) -- (8*\s/256,8/16);
}

\filldraw[black] (-0.8,4 )  node[anchor=west]{$2^{-1}$};
\filldraw[black] (-0.8,2 )  node[anchor=west]{$2^{-2}$};
\filldraw[black] (-0.8,1 )  node[anchor=west]{$2^{-3}$}; 
\filldraw[black] (-0.8,0.5 )  node[anchor=west]{$2^{-4}$};
\filldraw[black] (-0.5,-0.2)  node[anchor=west]{$0$};
\filldraw[black] (8,0)  node[anchor=west]{$t$};
\filldraw[black] (-0.2,8.2)  node[anchor=west]{$y$};
\filldraw[black] (4.5,3)  node[anchor=west]{$ S_{1,2}$};
\end{tikzpicture}
\centering
\caption[]{Dyadic decomposition of the unite square.}
\end{figure}

Distortion theorem allows us to estimate the derivative $|\hat{f}_{t}'(iy)|$ for $(t,y)\in S_{m,j}$ by its value at the corner of a rectangle $S_{m,j}$, i.e., 
\begin{equation}\label{rectangle_estimate}
|\hat{f}_{t}'(iy)| \lesssim |\hat{f}_{j/2^{2m}}'(i2^{-m})| \text{ for }(t,y) \text{ in } S_{m,j}.
\end{equation}
 The inequality (\ref{rectangle_estimate}) is obtained in three steps: for $(t,y)\in S_{m,j}$ we derive it in the following order
\begin{equation}\label{chain_of_inequalitites}
|f'_{t}(\lambda_t+iy)| \stackrel{(a)}{\lesssim}  |f'_{t}(\lambda_t+i2^{-m})| \stackrel{(b)}{\lesssim} |f'_{j/2^{2m}}(\lambda_t+i2^{-m})| \stackrel{(c)}{\lesssim} |f'_{j/2^{2m}}(\lambda_{j/2^{2m}}+i2^{-m})|.
\end{equation}

\textbf{a)}  Applying the Koebe distortion theorem, Lemma~\ref{Koebe},  to the conformal map $\hat{f}_{t} $ yields 
\begin{equation}\label{f_along_y}
|\hat{f}'_{t}\left(iy\right)|\le 12 |\hat{f}'_{t}\left(i2^{-m}\right)| \text{ for } y\in \left[2^{-\left(m+1\right)}, 2^{-m}\right].
\end{equation}

\textbf{b)} Next we obtain a bound $|f'_{t+s}\left(z\right)| \lesssim |f'_{t}\left(z\right)|$;
note that here we work with $f$, not with the centered map $\hat{f}$. Expand $f_{t}\left(\xi\right)$ around $z$ to get
\begin{equation*}
f_{t}\left(\xi\right) = f_{t}\left(z\right) + f_{t}'\left(z\right)\left(\xi-z\right) + \frac{1}{2}f''_t(z)\left(\xi-z\right)^{2} +  o\left(\left(\xi-z\right)^{2}\right).
\end{equation*}
We can rearrange this expression to obtain a univalent function of canonical form. Define $F_t:\mathbb{D}\to \mathbb{C}$ by setting
\begin{equation*}
F_t(w) = \frac{f_{t}\left(yw+z \right)-f_{t}\left(z\right)}{f'_{t}\left(z\right)y}.
\end{equation*}
It has the following expansion around the origin 
\begin{equation*}
F_t(w)= w + \frac{f''_{t}y}{2f'_{t}\left(z\right)}w^{2}  + o\left(w^{2}\right).
\end{equation*} 
That is $F_t(0)=0$ and $F_t'(0)=1$, so we can apply Bieberbach's conjecture (de Branges' theorem) which tells that the absolute value of the second coefficient is bounded by two
\begin{equation}\label{f''}
\left|\frac{f''_{t}\left(z\right)y}{2f'_{t}\left(z\right)}\right| \le 2.
\end{equation} 
We are going to use it to obtain a bound on $\left|\partial_{t}f_{t}'\right| $. For that differentiate the Loewner equation (\ref{Loewner_equation_for_f}) with respect to z 
\begin{equation*}
\partial_{t}f_{t}'= - \frac{2f''_{t}}{z-\lambda_{t}} + \frac{2f'_{t}}{\left(z-\lambda_{t}\right)^{2}},
\end{equation*}  
which gives us an estimate on the time-derivative 
\begin{equation*}
\left|\partial_{t}f_{t}'\right|\le  \left|\frac{2f''_{t}}{z-\lambda_{t}} \right| + \left|\frac{2f'_{t}}{\left(z-\lambda_{t}\right)^{2}}\right|.
\end{equation*}
After substituting (\ref{f''}) and noticing that $|z-\lambda_{t}|\ge \text{Im}z \equiv y$ the inequality becomes
\begin{equation*}
 \left|\partial_{t}f_{t}'\right| \le \frac{10 |f'_{t}|}{y^{2}}.
\end{equation*}
It allows us to estimate the following logarithmic derivative
\begin{equation*}
\partial_t\log|f'_t|\le \frac{|\partial_t f'_t|}{|f'_t|} \le \frac{10}{y^2}.
\end{equation*}
Integration over $[t,t+s]$ yields
\begin{equation*}
|f'_{t+s}\left(z\right)|\le e^{\frac{10s}{y^{2}}} |f'_{t}\left(z\right)|.
\end{equation*}

In our application of this inequality, when $(t+s, y)$ lives inside a given rectangle of the dyadic decomposition,  $s$ varies inside $[0,y^2]$, so we can simply use
\begin{equation}\label{func_at_different_times}
|f'_{t+s}\left(z\right)| \le e^{10} |f'_{t}\left(z\right)|.
\end{equation}

\textbf{c)} Next we would like to obtain an inequality $|f_{t}'\left(\lambda_{t+s}+iy\right)| \lesssim |f_{t}'\left(\lambda_{t}+iy\right)|$, where the real parts of the spatial arguments differ. Here we also restrict $s$ to the interval $\left[0,y^{2}\right]$ since it is the case in the dyadic decomposition. The family of points $ \left\{\lambda_{t+s}+iy\right\}_{s\in [0,y^2]} $ is restricted to the semi-disk around $\lambda_t$ of radius
 \begin{equation*}
 R(t,y) =  \sqrt{y^2+\left(\sup\limits_{s\in[0,y^2]} |\lambda_{t+s}-\lambda_{t}|\right)^2  }.
 \end{equation*}

Define a function $g\left(w\right) = f_{t}\left(\lambda_{t} + Rw\right)$ and a rectangle $S = \left\{ x+iy \in \mathbb{C}: x\in \left[-1,1\right], y\in\left[0,1\right]\right\}$. Then  $g:2S \to \mathbb{C}$ is a conformal map. If we consider two points 
\begin{equation*}
w_{\tau} = \frac{\lambda_{\tau}-\lambda_{t}+iy}{R}, \quad \tau = t, t+s, 
\end{equation*}
then both $w_{t}, w_{t+s}\in S$ and Im$w_t=$ Im$w_{t+s} = \frac{y}{R}$. Adjusting Lemma~\ref{Koebe} to the geometry of the domain $S$, see Lemma \ref{distortion_in_S} in the Appendix for the proof, we obtain the  following estimate
\begin{equation*}
|g'\left(w_{t+s}\right)| \le \tilde{c}_1 \left(\frac{R(t,y)}{y}\right)^{\tilde{c}_{2}} |g'\left(w_{t}\right)|,
\end{equation*}
where $\tilde{c_1}, \tilde{c_2}$ are universal constants (do not depend on $\kappa$). Hence, for $s\in \left[0,y^2\right]$, the inequality can be rewritten in the original notation as  
\begin{equation}\label{f_at_different_points}
|f_{t}'\left(\lambda_{t+s}+iy\right) |\le \tilde{c}_1  \left(\frac{R(t,y)}{y}\right)^{\tilde{c}_2} |f_{t}'\left(\lambda_{t}+iy\right)| 
\end{equation}

Combining (\ref{f_along_y}), (\ref{func_at_different_times}) and (\ref{f_at_different_points}) together gives us the chain (\ref{chain_of_inequalitites}) with missing constants. That is,  for $y\in \left[2^{-\left(m+1\right)},2^{-m}\right]$ and $t\in \left[j/2^{2m}, (j+1)/2^{2m}\right]$
\begin{equation}
|\hat f'_{t}(iy)| \le 12e^{10} \tilde{c_1}\left(1 + \left(\frac{1}{y} \sup\limits_{s\in[0,y^2]} |\lambda(t+s) - \lambda(t)|\right)^2\right)^{\tilde{c}_2/2} |\hat f'_{j/2^{2m}}(i2^{-m})|.
\end{equation}
The claim follows by setting $c_1 = 12e^{10}\tilde{c_1}$ and $c_2 = \tilde{c_2}/2$.
\end{proof}

Although the lemma we have just proved holds for any Loewner map, we are interested in its application to SLE. Returning to the convention when $f$ denotes the inverse Loewner map driven by $\sqrt{\kappa}B$,  we introduce the following event
\begin{equation}\label{definition_E}
 E_n \overset{\mathrm{def}}{=} \left\{|\hat{f}'_{t}\left(iy\right)|\le Q(p(t,y)) y^{-\beta} \text{ for all } y\in \left[0, 2^{-n}\right], \ t\in \left[0,1\right]\right\},
\end{equation}
where
\begin{equation*}
p(t,y) = \frac{\sqrt{\kappa}}{y}\sup\limits_{s\in[0,y^2]}|B(t+s) - B(t)|.
\end{equation*}

Proposition \ref{dyadic_decomposition} allows to give an upper bound on the probability of the complement event.
\begin{proposition}\label{proposition_E_compliment}
For fixed $\beta \in (0,1)$ and $\kappa < \beta$  we have 
\begin{equation}\label{E_complement} 
\mathbb{P}\left[E^{c}_n\right] \le   \frac{4^n}{1- 4^{-\left(\frac{\beta}{\kappa}-1\right)}} 4^{-\frac{\beta n }{\kappa}}.  
\end{equation}
\end{proposition}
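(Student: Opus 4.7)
\medskip

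\noindent\textbf{Proof plan.} The strategy is to combine Proposition~\ref{dyadic_decomposition} (which reduces control of $|\hat f'_t(iy)|$ on the whole region $[0,1]\times[0,2^{-n}]$ to control at a countable dyadic grid) with the moment estimate~(\ref{expectation_bound}) and a Markov--union-bound argument, then sum the resulting geometric series in $m\ge n$.

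\medskip

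\noindent\textbf{Step 1: Reduction to dyadic grid.} By Proposition~\ref{dyadic_decomposition}, if at every grid point $(j/2^{2m},\,2^{-m})$ with $m\ge n$ and $j\in\{1,\dots,2^{2m}\}$ one has $|\hat f'_{j/2^{2m}}(i2^{-m})|\le 2^{\beta m}$, then the event $E_n$ in (\ref{definition_E}) holds. Hence
\[
E_n^c \;\subset\; \bigcup_{m\ge n}\bigcup_{j=1}^{2^{2m}} A_{m,j},\qquad
A_{m,j}:=\bigl\{|\hat f'_{j/2^{2m}}(i2^{-m})|> 2^{\beta m}\bigr\}.
\]

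\medskip

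\noindent\textbf{Step 2: Markov's inequality with the sharp moment.} Using $\mathbb{E}\bigl[|\hat f'_t(iy)|^{2/\kappa}\bigr]\le 1$ from (\ref{expectation_bound}) together with Markov's inequality,
\[
\mathbb{P}(A_{m,j})\;\le\; \frac{\mathbb{E}\bigl[|\hat f'_{j/2^{2m}}(i2^{-m})|^{2/\kappa}\bigr]}{2^{2\beta m/\kappa}}\;\le\; 2^{-2\beta m/\kappa}.
\]
Taking a union bound over the $2^{2m}$ values of $j$ gives
\[
\mathbb{P}\!\left(\bigcup_{j=1}^{2^{2m}}A_{m,j}\right)\;\le\; 2^{2m}\cdot 2^{-2\beta m/\kappa}\;=\;4^{m(1-\beta/\kappa)}.
\]

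\medskip

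\noindent\textbf{Step 3: Summation in $m$.} Since $\kappa<\beta$, the exponent $1-\beta/\kappa$ is negative, so the geometric series over $m\ge n$ converges:
\[
\mathbb{P}(E_n^c)\;\le\;\sum_{m=n}^{\infty} 4^{m(1-\beta/\kappa)}
\;=\; 4^{n(1-\beta/\kappa)}\sum_{k=0}^{\infty} 4^{-k(\beta/\kappa-1)}
\;=\;\frac{4^n\cdot 4^{-\beta n/\kappa}}{1-4^{-(\beta/\kappa-1)}},
\]
which is the asserted inequality.

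\medskip

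\noindent\textbf{Main obstacle.} There is no real obstacle beyond bookkeeping: the heavy lifting has already been done in Proposition~\ref{dyadic_decomposition} (deterministic distortion) and in the supermartingale argument giving (\ref{expectation_bound}). The only subtle point is matching the Markov exponent $p=2/\kappa$ against the required threshold $2^{\beta m}$ so that the union bound $2^{2m}\cdot 2^{-2\beta m/\kappa}$ still yields a summable geometric tail; this is precisely what forces the hypothesis $\kappa<\beta$ and produces the prefactor $(1-4^{-(\beta/\kappa-1)})^{-1}$.
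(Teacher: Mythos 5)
Your proposal is correct and follows essentially the same route as the paper's proof: the contrapositive of Proposition~\ref{dyadic_decomposition} reduces $E_n^c$ to the dyadic grid events, Chebyshev/Markov with the exponent $p=2/\kappa$ and the moment bound (\ref{expectation_bound}) control each grid point, and the union bound plus geometric summation over $m\ge n$ (using $\kappa<\beta$) yields exactly the stated constant. Nothing further is needed.
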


\begin{proof}
By Proposition \ref{dyadic_decomposition} and a union bound
\begin{equation*}
\begin{split}
\mathbb{P}\left[E_n^c\right] 
&\le \mathbb{P}\left[\bigcup_{m=n}^{\infty}\left\{|\hat{f}'_{j/2^{2m}}\left(i2^{-m}\right)| \le 2 ^{\beta m} \text{ for all } j=\overline{1,2^{2m}}\right\}^{c}\right] \\
&\le\sum\limits_{m=n}^{\infty} \mathbb{P}\left[\exists j \in \overline{1,2^{2m}}:|\hat{f}'_{j/2^{2m}}\left(i2^{-m}\right)| > 2 ^{\beta m}\right]\\
&\le\sum\limits_{m=n}^{\infty}\sum\limits_{j=1}^{2^{2m}} \mathbb{P}\left[|\hat{f}'_{j/2^{2m}}\left(i2^{-m}\right)| > 2 ^{\beta m}\right].
\end{split}
\end{equation*}
By the Chebyshev inequality
\begin{equation*}
\mathbb{P}\left[|\hat{f}'_{j/2^{2m}}\left(i2^{-m}\right)| > 2 ^{\beta m}\right] \le 2^{-\frac{2m \beta}{\kappa}}\mathbb{E}\left[\left|\hat{f}'_{j/2^{2m}}\left(i2^{-m}\right)\right|^{2/\kappa}\right].
\end{equation*}
 The expectation is bounded above by $1$ due to the  moment estimate  (\ref{expectation_bound}) and, then, the summation becomes
\begin{equation*}
\sum\limits_{m=n}^{\infty}\sum\limits_{j=1}^{2^{2m}} \mathbb{P}\left[|\hat{f}'_{j/2^{2m}}\left(i2^{-m}\right)| \ge 2 ^{\beta m}\right] \le \sum\limits_{m=n}^{\infty} 2^{-2\left(\frac{\beta}{\kappa}-1\right) m} = \frac{4^{-\left(\frac{\beta}{\kappa}-1\right) n}}{1- 4^{-\left(\frac{\beta}{\kappa}-1\right)}} \text{ for } \kappa < \beta.
\end{equation*}
\end{proof}
In the next section we will need analogous result for a similar event but using a different $y$-scale, namely
\begin{equation}\label{definition_F}
F_n \overset{\mathrm{def}}{=}  \left\{|\hat{f}'_{t}\left(iy\right)| \le Q(p(t,y)) y^{-\beta} \text{ for } y\in \left[0,1/\sqrt{n}\right], \ t\in\left[0,1\right]\right\}.
\end{equation}
Compare with the definition (\ref{definition_E}) of the event $E_n$. The upper bound of the probability of the compliment follows immediately from Proposition \ref{proposition_E_compliment} by changing the scale from $2^{-n}$ to $1/\sqrt{n}$
\begin{equation}\label{compliment_F}
\mathbb{P}\left[F^{c}_n\right] \le  \frac{n}{1- 4^{-\left(\frac{\beta}{\kappa}-1\right)}} e^{-\frac{\beta\log n }{\kappa}}.
\end{equation}

We would like to have an estimate $|\hat{f}'_{t}\left(iy\right)|\le \psi(n) y^{-\beta}$ where $\psi$ is $n$-dependent function and not a random variable like in (\ref{definition_E}) or (\ref{definition_F}). 
For that we define 
\begin{equation}\label{definition_P_H}
\begin{split}
&P_{n} \overset{\mathrm{def}}{=}  \left\{|\hat{f}_{t}(iy)|\le \psi(n)y^{-\beta} \text{ for } y\in[0, 2^{-n}], t\in[0,1] \right\},\\
&H_{n} \overset{\mathrm{def}}{=}  \left\{|\hat{f}_{t}(iy)|\le \psi(n)y^{-\beta} \text{ for } y\in[0, 1/\sqrt{n}], t\in[0,1] \right\}, 
\end{split}
\end{equation} 
where the $n$-dependent factor is given by
\begin{equation*}
\psi(n) = c_1\left(1+\log n\right)^{c_2};
\end{equation*}
recall that the constants $c_1$ and $c_2$ are universal, i.e., do not depend on $\kappa$. The events $P_n$ and $E_n$ (similarly $F_n$ and $H_n$) satisfy the following inclusion relation
\begin{equation*}
 E_{n}\cap \left\{\sup\limits_{y\in [0,2^{-n}]} \text{osc}\left(B/y, y^2, [0,1]\right) < \sqrt{\frac{\log n}{\kappa}}\right\}  \subset P_n;
\end{equation*}
here we denote by 
  \begin{equation*}
 \text{osc}\left(\lambda, \delta, [0,T]\right)\overset{\mathrm{def}}{=}  \sup \left\{|\lambda(t) - \lambda(s)|: t,s \in [0,T], |t-s|\le \delta\right\} 
  \end{equation*} 
the oscillation of the function $\lambda$ over the interval $[0,T]$ with the increment $\delta$. This inclusion together with Proposition \ref{proposition_E_compliment} allow us to estimate the probability of the complement of $P_n$
\begin{equation}\label{complement_P}
\mathbb{P}\left[P_n^c\right] \le \mathbb{P}\left[E_n^c\right] + \mathbb{P}\left[\sup\limits_{s\in[0,1]} |B(s)| \ge \sqrt{\frac{\log n}{\kappa}}\right] \le \frac{4^n}{1- 4^{-\left(\frac{\beta}{\kappa}-1\right)}} 4^{-\frac{\beta n }{\kappa}} + 2 e^{-\frac{\log n }{\kappa}}.
\end{equation}

The term with Brownian motion is bounded with the help of Brownian scaling
\begin{equation*}
\text{osc}\left(\frac{B}{y}, y^2, [0,1]\right) \overset{\mathrm{d}}{=}  \sup\limits_{s\in[0,1]}\left|B(s)\right|
\end{equation*}

 and the following estimate, for $x>0$,
\begin{equation*}
 \mathbb{P}\left[\sup\limits_{s\in[0,1]}\left|B(s)\right|\ge x\right] \le 2 e^{-x^2}.
\end{equation*}

 Estimate analogous to (\ref{complement_P}) but for the event $H_n$ is given by
\begin{equation}\label{compliements_H_F}
 \mathbb{P}\left[H_n^c\right]\le \frac{n}{1- 4^{-\left(\frac{\beta}{\kappa}-1\right)}} e^{-\frac{\beta\log n }{\kappa}}   + 2e^{-\frac{\log n }{\kappa}}.
\end{equation}

\section{Approximation of SLE$_\kappa$}\label{section_convergence_to_SLE}
In this section we will consider curves generated by approximations of Brownian motion. Let $B_n$ denote a piece-wise linear approximation of Brownian motion, i.e.,
	\begin{equation*}
		B_n(t) = n\left( B(t_{k})-B(t_{k-1}) \right)\left(t-t_{k-1}\right) + B(t_{k-1}) \text{ for } t\in [t_{k-1},t_{k}),
	\end{equation*}
where $\{t_{k}\}_{k=0}^{n}$ is a partition of $\left[0,1\right]$ into $n$ equal intervals.  In \cite{Tran} H. Tran showed that $\gamma_{n}= \mathcal{L}\left(\sqrt{\kappa}B_n\right)$ converges almost surely to SLE$_\kappa$ in the supremum norm. The convergence follows from the following estimate obtained in the aforementioned paper
\begin{equation*}
	\mathbb{P}\left[ ||\gamma^\kappa-\gamma_{n}||_{\infty}\le \frac{\varphi(n)}{\sqrt{n}^{1-\sqrt{\frac{1+\beta}{2}}}}\right] \ge 1- \frac{c_3}{n^{c_{4}}},
\end{equation*}
where $\beta \in (0,1)$ and  $\varphi$ is a sub-power function and $c_3, c_4$ are $\kappa$-dependent constants. Recall that $\varphi: [0,\infty) \to [0,\infty)$ is called a sub-power function if for every $\alpha>0$
\begin{equation*}
\lim\limits_{x\to\infty}\frac{\varphi(x)}{x^\alpha} = 0.
\end{equation*}

In our application we need to know explicitly how the constants $c_3, c_4$ and the sub-power function $\varphi$ depend on $\kappa$.  In the original proof the parameter $\kappa$ was fixed since the emphasis was on the behavior in the limit $n\to\infty$.  Our situation is the opposite, and we will be interested in the behavior when $n$ is fixed while $\kappa \downarrow 0$.

\begin{proposition}\label{convergence_result}
Fix $\beta \in (0,1)$. Then for any $\kappa<\beta$ and $ \zeta \in \left(0, \frac{1}{2}\left(1-\sqrt{\frac{1+\beta}{2}}\right)\right)$ there exists $N=N(\beta, \zeta)\in\mathbb{N}$ such that for any integer $n>N$
\begin{equation}\label{convergence_bound}
\mathbb{P}\left[||\gamma-\gamma_n||_{\infty} \ge n^{-\zeta}\right] \le B(n,\kappa)\left(n/2\right)^{-\frac{\beta}{\kappa}},
\end{equation}
where
\begin{equation}\label{definition_B}
B\left(n,\kappa\right) = 2 + c_0 +\frac{n}{1 - 4^{-\left(\frac{\beta}{\kappa}-1\right)}}
\end{equation}
and $c_0$ is universal constant.
\end{proposition}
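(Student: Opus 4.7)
The plan is to follow the template of \cite{Tran}, but carefully keep track of how all constants depend on $\kappa$. The starting point is the event $H_n$ from (\ref{definition_P_H}), on which the SLE derivative satisfies $|\hat f'_t(iy)|\le \psi(n)y^{-\beta}$ for $y\in[0,1/\sqrt n]$, $t\in[0,1]$; by (\ref{compliements_H_F}) its complement has probability of the desired form. I would also introduce the auxiliary event $G_n=\{\|B-B_n\|_\infty \le \sqrt{\log n/n}\}$. Since $B_n$ is the linear interpolation of $B$ on the partition of $[0,1]$ into $n$ equal intervals, $\|B-B_n\|_\infty$ is controlled by the maximal oscillation of $B$ over intervals of length $1/n$; a standard reflection/union bound then yields $\mathbb{P}[G_n^c]\le c_0 \, n^{-\beta/\kappa}$ once $n$ is large enough in terms of $\beta$.

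On $H_n\cap G_n$, I would use the triangle inequality, for any $y\in(0, 1/\sqrt n]$ to be chosen,
\begin{equation*}
|\gamma(t)-\gamma_n(t)| \le \underbrace{|\gamma(t)-\hat f_t(iy)|}_{A_1} + \underbrace{|\hat f_t(iy)-\hat f^{(n)}_t(iy)|}_{A_2} + \underbrace{|\hat f^{(n)}_t(iy)-\gamma_n(t)|}_{A_3}.
\end{equation*}
The term $A_1$ is handled by integrating the estimate from $H_n$: $A_1\le \int_0^y |\hat f'_t(ir)|\,dr\le \psi(n) y^{1-\beta}/(1-\beta)$. For $A_2$ I would insert the intermediate point $f_t(\sqrt\kappa B_n(t)+iy)$: the piece $|f_t(\sqrt\kappa B(t)+iy)-f_t(\sqrt\kappa B_n(t)+iy)|$ is at most $\sqrt\kappa\|B-B_n\|_\infty$ times the supremum of $|f'_t|$ along the horizontal segment joining the two points, which by Koebe's distortion (Lemma \ref{Koebe}) and the bound on $H_n$ is of order $\psi(n)y^{-\beta}$, while the piece $|f_t(\sqrt\kappa B_n(t)+iy)-f^{(n)}_t(\sqrt\kappa B_n(t)+iy)|$ is bounded directly by Lemma \ref{lemma_continuity} by $\sqrt\kappa\|B-B_n\|_\infty\sqrt{1+4/y^2}$. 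On $G_n$ this yields $A_2\lesssim\sqrt{\kappa\log n/n}\,(\psi(n)y^{-\beta}+1/y)$.

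The third term $A_3$ requires a derivative bound for $\hat f^{(n)}$ analogous to the one available on $H_n$ for $\hat f$. The plan is to \emph{transfer} the bound: Lemma \ref{lemma_continuity} together with the Koebe distortion theorem allows to compare $(\hat f^{(n)}_t)'(iy)$ with $\hat f'_t(iy')$ for a slightly larger $y'$, paying a factor polynomial in the ratio; on $H_n\cap G_n$ this gives $|(\hat f^{(n)}_t)'(ir)|\le C\psi(n) r^{-\beta}$ in a slightly reduced range of $r$, whence $A_3\lesssim \psi(n)y^{1-\beta}$ by integration. Finally, I would set $y = n^{-\alpha}$ and optimize $\alpha$ so that all three terms are at most $n^{-\zeta}$; because $\psi$ is sub-power and the deterministic transfer in $A_3$ costs a polynomial factor in the driving-function discrepancy, the critical exponent comes out to $\frac{1}{2}(1-\sqrt{(1+\beta)/2})$ (any strictly smaller $\zeta$ being achievable for $n\ge N(\beta,\zeta)$). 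Collecting, $\{\|\gamma-\gamma_n\|_\infty\ge n^{-\zeta}\}\subset H_n^c\cup G_n^c$, whose total probability is bounded via (\ref{compliements_H_F}) and the Gaussian tail for $G_n^c$, producing (\ref{convergence_bound}) with $B(n,\kappa)$ exactly of the form (\ref{definition_B}).

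The main obstacle I anticipate is the bound on $A_3$: unlike $\hat f$, the map $\hat f^{(n)}$ comes with no direct moment estimate for its derivative uniform in $n$, and one must bootstrap from the SLE estimate through the deterministic continuity of the Loewner correspondence. Keeping this transfer quantitative, and in particular not losing more than a polynomial factor in $1/y$, is what both complicates the calculation and forces the exponent $(1-\sqrt{(1+\beta)/2})/2$ rather than the naive $(1-\beta)/(2(2-\beta))$ one would guess from balancing $A_1$ and $A_2$ alone.
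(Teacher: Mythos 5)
Your overall architecture matches the paper's: the paper likewise reduces the proposition to the observation (taken from the proof in \cite{Tran}) that $\|\gamma-\gamma_n\|_\infty \le n^{-\zeta}$ holds deterministically on the intersection of the derivative event $H_n$ with an event controlling the modulus of continuity of the driving function, and then bounds the two complement probabilities with explicit $\kappa$-dependence via (\ref{compliements_H_F}) and an oscillation estimate. However, your second event is mis-scaled, and this is a genuine error rather than a cosmetic one. You take $G_n=\{\|B-B_n\|_\infty\le\sqrt{\log n/n}\}$, an event about the \emph{unscaled} Brownian motion with a $\kappa$-independent threshold; consequently $\mathbb{P}[G_n^c]$ is a fixed positive number depending only on $n$, and the claimed bound $\mathbb{P}[G_n^c]\le c_0\,n^{-\beta/\kappa}$ fails for small $\kappa$: the right-hand side tends to $0$ as $\kappa\downarrow0$ while the left-hand side does not. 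Since (\ref{convergence_bound}) requires the total error probability to decay like $(n/2)^{-\beta/\kappa}$ for fixed $n$ as $\kappa\downarrow0$, a union bound involving your $G_n$ cannot produce the statement. What is needed, and what the paper does, is to control the oscillation of the \emph{scaled} driver: the event $L_n=\{\mathrm{osc}(\sqrt{\kappa}B,2/n,[0,1])\le\sqrt{2/n}\,\varphi_L(n/2)\}$ with a $\kappa$-free sub-power threshold, whose complement is bounded by $c_0(n/2)^{-2/\kappa}$ through Lemma \ref{Lawler_Limic_lemma}; equivalently, your threshold for $\|B-B_n\|_\infty$ must be allowed to grow like $\kappa^{-1/2}$. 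With that replacement the probabilistic half of your argument does yield a bound of the form $B(n,\kappa)(n/2)^{-\beta/\kappa}$ with $B$ as in (\ref{definition_B}).

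On the deterministic half, the paper does not redo the work: it cites \cite{Tran}, whose proof shows precisely that on the intersection of $H_n$ with the oscillation event one has $\|\gamma-\gamma_n\|_\infty\le n^{-\zeta}$ for every $\zeta<\frac{1}{2}\bigl(1-\sqrt{(1+\beta)/2}\bigr)$ once $n$ is large. Your decomposition into $A_1,A_2,A_3$ is a plausible reconstruction, but the crucial step, transferring the derivative bound from $\hat f$ to $\hat f^{(n)}$ with only a polynomial loss in $1/y$ and then optimizing to obtain the exponent $\frac{1}{2}\bigl(1-\sqrt{(1+\beta)/2}\bigr)$, is asserted rather than carried out; note that Lemma \ref{lemma_continuity} compares values of the maps, not derivatives, so the transfer needs an additional Cauchy/Koebe-type estimate on a disc of radius comparable to $y$, and it is exactly this bookkeeping that produces the stated exponent. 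If your argument is meant to replace the citation to \cite{Tran}, that computation must be done explicitly; otherwise, cite Tran as the paper does, and the proof reduces to the (corrected) probability estimates above.
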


Examining the proof in \cite{Tran} we see that the inequality  \[||\gamma-\gamma_n||_{\infty} \le n^{-\zeta}\] holds on the intersection of the following two events:\\
1) $H_n = \left\{|\hat{f}'_{t}(iy)| \le \psi(n) y^{-\beta} \text{ for } y\in \left[0,1/\sqrt{n}\right], t\in\left[0,1\right]\right\}$, with $\psi(n) = c_1\left(1+\log n\right)^{c_2}$;\\
2) $L_n = \left\{ \text{osc}\left(\sqrt{\kappa}B, 2/n, [0,1]\right) \le \sqrt{2/n}\varphi_L\left(n/2\right)\right\}$, where $\varphi_L$ is a sub-power function.\\
The estimate (\ref{compliements_H_F}) covers the needed estimate for the first event while the second one can be deduced from the following lemma.
\begin{lemma}[Theorem 3.2.4 in \cite{Lawler_Limic}]\label{Lawler_Limic_lemma}
Let B be the standard Brownian motion on $[0,1]$. There is an absolute constant $c_0<\infty$ such that for all $0<\delta\le 1$ and $r>c_0$
\begin{equation*}
\mathbb{P}\left[\text{osc}\left(\sqrt{\kappa}B(t), \delta, [0,1]\right) \ge r \sqrt{\delta\log\left(1/\delta\right)}\right]\le c_0 \delta^{\frac{(r/c_0)^2}{\kappa}}.
\end{equation*}
\end{lemma}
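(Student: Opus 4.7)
The plan is to reduce to standard Brownian motion by scaling, then to use the classical tactic of partitioning $[0,1]$ into intervals of length of order $\delta$, applying a Gaussian maximum inequality on each, and concluding by a union bound, finally absorbing the union-bound factor into the exponent by demanding $r > c_0$.

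First I would use Brownian scaling to eliminate $\kappa$. Since $\text{osc}(\sqrt{\kappa}B,\delta,[0,1]) = \sqrt{\kappa}\,\text{osc}(B,\delta,[0,1])$, the claimed estimate with parameter $r$ for $\sqrt{\kappa}B$ is equivalent to the estimate with parameter $r/\sqrt{\kappa}$ for the standard Brownian motion $B$. So it suffices to prove that there is a universal constant $C$ such that for all $0<\delta\le 1$ and all $R>C$,
\begin{equation*}
\mathbb{P}\!\left[\text{osc}(B,\delta,[0,1]) \ge R\sqrt{\delta\log(1/\delta)}\right] \le C\,\delta^{(R/C)^2};
\end{equation*}
substituting $R = r/\sqrt{\kappa}$ yields the stated exponent $(r/c_0)^2/\kappa$, and the condition $R>C$ becomes $r>C\sqrt{\kappa}$, which is implied by $r>c_0$ for $c_0\ge C$ (as $\kappa\le 1$ in the regime of interest, but one can also just take $c_0$ large enough).

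Next I would set $N = \lceil 1/\delta\rceil$ and partition $[0,1]$ into intervals $I_k = [(k-1)\delta, k\delta]\cap[0,1]$, $k=1,\dots,N$. If $s,t \in [0,1]$ with $|s-t|\le\delta$, then $s$ and $t$ lie in the union of two consecutive intervals $I_k\cup I_{k+1}$, and the triangle inequality gives
\begin{equation*}
|B(t)-B(s)| \le 2\max_{1\le k\le N}\;\sup_{u\in I_k}|B(u)-B((k-1)\delta)|.
\end{equation*}
Thus if $M_k = \sup_{u\in[0,\delta]}|B((k-1)\delta+u)-B((k-1)\delta)|$, then $\text{osc}(B,\delta,[0,1])\le 2\max_k M_k$. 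Each $M_k$ has the law of $\sup_{u\in[0,\delta]}|B(u)|$, and the reflection principle together with a standard Gaussian tail bound yields
\begin{equation*}
\mathbb{P}[M_k \ge x] \le 4\exp\!\left(-\frac{x^2}{2\delta}\right).
\end{equation*}
Applying this with $x = R\sqrt{\delta\log(1/\delta)}/2$ and a union bound over $k=1,\dots,N$ gives
\begin{equation*}
\mathbb{P}\!\left[\text{osc}(B,\delta,[0,1]) \ge R\sqrt{\delta\log(1/\delta)}\right]
\le 4N\,\exp\!\left(-\frac{R^2\log(1/\delta)}{8}\right)
\le \frac{8}{\delta}\,\delta^{R^2/8}.
\end{equation*}

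Finally I would absorb the prefactor $8/\delta$ into the exponent. Write $\delta^{R^2/8}/\delta = \delta^{R^2/8 - 1}$; for $R\ge 4$ we have $R^2/8 - 1 \ge R^2/16 = (R/4)^2$, so
\begin{equation*}
\mathbb{P}\!\left[\text{osc}(B,\delta,[0,1]) \ge R\sqrt{\delta\log(1/\delta)}\right] \le 8\,\delta^{(R/4)^2}.
\end{equation*}
Taking $C = \max(8,4)$ (then writing $c_0$ for the analogous constant after the scaling step, chosen large enough to swallow both the multiplicative $8$ and the factor $4$ appearing in the exponent) delivers the inequality in the form stated in the lemma, provided $r>c_0$ (equivalently $R>C$). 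The only mildly delicate step is the bookkeeping of constants in the last paragraph: one must arrange that the constant in front of $\delta$ in the exponent, the multiplicative constant, and the minimal-$r$ threshold all coincide with a single $c_0$, which is exactly what the hypothesis $r>c_0$ allows after one makes $c_0$ sufficiently large.
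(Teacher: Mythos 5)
The paper does not prove this lemma at all: it is quoted verbatim as Theorem 3.2.4 of the cited Lawler--Limic monograph and used as a black box, so your self-contained argument is by construction a ``different route.'' Your proof is the standard one (scale out $\sqrt{\kappa}$, partition $[0,1]$ into $\lceil 1/\delta\rceil$ blocks of length $\delta$, control each block by the reflection principle and a Gaussian tail, union-bound, and absorb the $1/\delta$ prefactor into the exponent using $r>c_0$), and it is essentially correct; it has the virtue of making explicit how the exponent $(r/c_0)^2/\kappa$ arises and why the hypothesis $r>c_0$ is needed. One quantitative slip: the chaining inequality
\begin{equation*}
|B(t)-B(s)| \le 2\max_{k}\sup_{u\in I_k}\left|B(u)-B((k-1)\delta)\right|
\end{equation*}
is false as stated with the constant $2$. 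If $s\in I_k$ and $t\in I_{k+1}$, passing through the common endpoint $k\delta$ gives $|B(t)-B(s)|\le M_{k+1}+|B(s)-B(k\delta)|\le M_{k+1}+2M_k\le 3\max_k M_k$, since $|B(s)-B(k\delta)|$ must itself be chained through the left endpoint $(k-1)\delta$. This only changes the numerical constants downstream (use $x=R\sqrt{\delta\log(1/\delta)}/3$, get $\delta^{R^2/18}$, and a correspondingly larger $c_0$), and since the lemma asserts only the existence of some absolute $c_0$, the conclusion survives intact. The other delicate point you flag yourself --- that reducing $R>C$ to $r>c_0$ uses $\kappa\lesssim 1$ --- is harmless here because the paper only invokes the lemma for $\kappa<\beta<1$, and for large $\kappa$ the claimed bound exceeds $1$ and is vacuous.
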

For our application pick $\delta = 2/n$ and, for example, $r=\sqrt{2}c_0$.  Then the sub-power function $\varphi_L(x) = c_0\sqrt{2\log x}$ gives the second assumption with the probability estimate given by
\begin{equation*}
\mathbb{P}\left[L_n^c\right] \le c_0 \left(n/2\right)^{-\frac{2}{\kappa}}.
\end{equation*}
A union bound then implies
\begin{equation*}
\begin{split}
\mathbb{P}\left[||\gamma-\gamma_n||_{\infty} \ge n^{-\zeta}\right] &\le \mathbb{P}\left[H_n^c\right] + \mathbb{P}\left[L_n^c\right]\\
& \le \frac{n}{1- 4^{-\left(\frac{\beta}{\kappa}-1\right)}} n^{-\frac{\beta}{\kappa}}   + 2n^{-\frac{1}{\kappa}} + c_0 \left(n/2\right)^{-\frac{2}{\kappa}},
\end{split}
\end{equation*} 
which gives estimate (\ref{convergence_bound}).

\section{Large deviation principle: proof of Theorem~\ref{LDP_theorem}}\label{section_proof}
This section proves Theorem~\ref{LDP_theorem}. The proof is divided into three parts, which are completed in its own subsection: first we derive the theorem for a finite time interval $[0,T]$, where closed and open sets are treated separately, and afterwards extend obtained result to the time interval $[0,\infty)$. As mentioned in the outline of the proof, for finite time interval it is sufficient to consider the  interval $[0,1]$ due to scale invariance of SLE.

\subsection{Closed sets}
In this section $\gamma$ is SLE$_\kappa$ and $\gamma_n$ is the Loewner curve driven by $\sqrt{\kappa}B_n$, where $B_n$ is a piece-wise linear approximation of Brownian motion.

Let $F$ be a closed subset in $S_1$; recall the definition of this space in (\ref{definition_S_T}). For any $\delta > 0$ and $n\in \mathbb{N}$ consider the following decomposition  
	\begin{equation}\label{decomposition_for_closed_case}
		\mathbb{P}\left[\gamma \in F\right] = \mathbb{P}\left[\gamma\in F, ||\gamma-\gamma_{n}||_{\infty} < \delta \right] + \mathbb{P}\left[\gamma\in F, ||\gamma-\gamma_{n}||_{\infty} \ge \delta \right].
	\end{equation}
	The main idea is to show that in the limit $\kappa \downarrow 0 $ the first term is dominant if we tune the parameters $n$ and $\delta$ appropriately. 
	
	\textbf{(a)} Denote by $F^{\delta}$ the $\delta$-neighborhood of $F$. Then, we have
	 \begin{equation*}
		\mathbb{P}\left[\gamma\in F, ||\gamma-\gamma_{n}||_{\infty} < \delta \right] \le \mathbb{P}\left[\gamma_{n} \in F^{\delta}\right] \le \mathbb{P}\left[I\left(\gamma_n\right) \ge I\left(F^{\delta}\right)\right] .
	\end{equation*}
The Loewner energy of the curve $\gamma_n$ can be computed explicitly due to piece-wise linearity of the driving function. We have	
	\begin{equation*}
		\frac{2}{\kappa}I(\gamma_n) = \frac{2}{\kappa} I\left(\sqrt{\kappa} B_n\right) = \sum\limits_{k=1}^{n}n\left(B(t_{k}) - B(t_{k-1})\right)^{2}.
	\end{equation*}
Standard properties of Brownian motion imply that the terms of this sum are mutually independent and normally distributed. Hence, the sum has a $\chi^{2}$-distribution which allows us to give the following estimate, valid for $\kappa < I(F^\delta)/n$,
	\begin{equation*}
		\begin{split}
			\mathbb{P}\left[  I\left(\gamma_n\right) \ge I(F^{\delta}) \right]  = \frac{1}{2^{\frac{n}{2}}\Gamma(\frac{n}{2})} \int\limits_{\frac{2I(F^{\delta})}{\kappa}}^{\infty}x^{\frac{n}{2}-1}e^{-\frac{x}{2}}dx\le \frac{4\left(\frac{2I(F^{\delta})}{\kappa}\right)^{\frac{n}{2}-1}}{2^{\frac{n}{2}}\Gamma(\frac{n}{2})}e^{-\frac{I(F^{\delta})}{\kappa}}.
		\end{split}
	\end{equation*}
	This provides the upper bound on the first term in (\ref{decomposition_for_closed_case}):
	\begin{equation}\label{a_term}
	\mathbb{P}\left[\gamma\in F, ||\gamma-\gamma_{n}||_{\infty} < \delta \right] \le A\left(\kappa,n\right)e^{-\frac{I(F^{\delta})}{\kappa}}, \text{ where } A(\kappa, n) = \frac{4\left(\frac{2I(F^{\delta})}{\kappa}\right)^{\frac{n}{2}-1}}{2^{\frac{n}{2}}\Gamma(\frac{n}{2})}.
	\end{equation}
	
	\textbf{(b)} The second term in (\ref{decomposition_for_closed_case}) can be first bounded by trivial inclusion 
	\begin{equation*}
	\mathbb{P}\left[\gamma\in F, ||\gamma-\gamma_{n}||_{\infty} \ge \delta \right] \le \mathbb{P}\left[||\gamma-\gamma_{n}||_{\infty} \ge \delta \right].
	\end{equation*}
	Next, we use convergence of the approximating curve to SLE, see Section \ref{section_convergence_to_SLE}. Proposition \ref{convergence_result} states that for any $\beta \in \left(0,1\right)$, any $\kappa < \beta$ and $ \zeta \in \left(0, \frac{1}{2}\left(1-\sqrt{\frac{1+\beta}{2}}\right)\right)$ there exists $N=N(\beta, \zeta)\in\mathbb{N}$ such that for all $n>N$ 
	\begin{equation*}
		\mathbb{P}\left[ ||\gamma-\gamma_{n}||_{\infty}\ge n^{-\zeta}\right] \le B\left(\kappa, n\right)e^{-\frac{\beta\log (n/2)}{\kappa}}, \text{ where }B(\kappa, n) = 2 + c_0 +\frac{n}{1 - 4^{-\left(\frac{\beta}{\kappa}-1\right)}}.
	\end{equation*}
If we choose $n=n(\delta, \zeta, \beta)$ such that $\delta > n^{-\zeta}$, then the second term in (\ref{decomposition_for_closed_case}) is bounded by 

	\begin{equation}\label{b_term}
\mathbb{P}\left[\gamma\in F, ||\gamma-\gamma_{n}||_{\infty} \ge \delta \right]\le B\left(\kappa, n\right) e^{-\frac{\beta\log (n/2)}{\kappa}}.
	\end{equation}
Therefore, 	combining (\ref{a_term}) and (\ref{b_term}) together gives
\begin{equation*}
\mathbb{P}\left[\gamma\in F \right] \le A\left(\kappa,n\right)e^{-\frac{I(F^{\delta})}{\kappa}} +  B\left(\kappa, n\right) e^{-\frac{\beta\log (n/2)}{\kappa}},
\end{equation*}	
and after taking the logarithm of both sides and multiplying by $\kappa$ we obtain
\begin{equation*}
\kappa\log\mathbb{P}\left[\gamma\in F \right]  \le -I\left(F^{\delta}\right) + \kappa \log A\left(\kappa,n\right) + \kappa \log \left(1 + \frac{B\left(\kappa,n\right)}{A\left(\kappa,n\right)}e^{-\frac{\beta\log (n/2)- I\left(F^{\delta}\right)}{\kappa}}\right).
\end{equation*}
If $n$ is big enough, that is, if $n > 2e^{\frac{I(F)}{\beta}}$, then in the limit $\kappa\downarrow 0$ the last two terms will vanish since
\begin{equation*}
\lim\limits_{\kappa\downarrow 0}\frac{B\left(\kappa,n\right)}{A\left(\kappa,n\right)} = \lim\limits_{\kappa\downarrow 0}\frac{\left(2 + c_0 +\frac{n}{1 - 4^{-\left(\frac{\beta}{\kappa}-1\right)}}\right)2^{\frac{n}{2}}\Gamma(\frac{n}{2})}{4\left(\frac{2I(F^{\delta})}{\kappa}\right)^{\frac{n}{2}-1}} = 0
\end{equation*}
and
\begin{equation*}
\lim\limits_{\kappa\downarrow 0}\kappa \log A(\kappa,n) = \lim\limits_{\kappa\downarrow 0}\left(\kappa\log \frac{4\left(2I(F^{\delta})\right)^{\frac{n}{2}-1}}{2^{\frac{n}{2}}\Gamma(\frac{n}{2})} - \left(\frac{n}{2}-1\right)\kappa\log\kappa\right) = 0.
\end{equation*}
Therefore, in the limit
	
	\begin{equation*}
		\uplim\limits_{\kappa\downarrow 0} \kappa \log\left(\mathbb{P}\left[\gamma \in F\right] \right) \le - I(F^{\delta}).
	\end{equation*}
Lemma \ref{closed_sets_rate_function} asserts that taking the limit $\delta \downarrow 0$ gives us the first part of Theorem \ref{LDP_theorem} for finite  time, namely
\begin{equation*}
		\uplim\limits_{\kappa\downarrow 0} \kappa \log\left(\mathbb{P}\left[\gamma \in F\right] \right) \le - I(F) \text{ for any closed  } F \subset S_1.
\end{equation*}

\subsection{Open sets}
Let $G$ be an open set in $S_1$ and assume $I(G) < \infty$, otherwise the LDP inequality is trivial. For every $\alpha > 0$ there is a curve $\gamma^{\alpha}\in G$  such that $I(\gamma_{\alpha})\le I(G)+\alpha$ and, since the set is open,  $G$ contains an open ball $B\left(\gamma_{\alpha}, r_{\alpha}\right)$ around $\gamma_\alpha$ with some positive radius $r_{\alpha}$. Thus, if $\gamma$ denotes SLE$_\kappa$, by monotonicity 
\begin{equation*}
\mathbb{P}\left[\gamma \in G \right] \ge \mathbb{P}\left[ ||\gamma- \gamma_{\alpha}||_{\infty} < r_{\alpha}\right].
\end{equation*}

For $y>0$ consider the following decomposition
\begin{equation*}
|\gamma(t)-\gamma^{\alpha}(t)| \le |\gamma(t)-\hat{f}_{t}(iy)|+ |\gamma_{\alpha}(t)-\hat{f}^{\alpha}_{t}(iy)|+ |\hat{f}_{t}(iy)-\hat{f}^{\alpha}_{t}(iy)|,
\end{equation*}
where $f_{t}$ and $f^{\alpha}_{t}$ are solutions to the Loewner equation with driving functions $\sqrt{\kappa}B$ and $\lambda^{\alpha} = \mathcal{L}^{-1}\left(\gamma^{\alpha}\right)$ correspondingly. The middle term is deterministic and can be bounded from above with the help of Lemma \ref{FrizShekhar_lemma}

\begin{equation*}
|\gamma_{\alpha}(t)-\hat{f}^{\alpha}_{t}(iy)|\le  \int\limits_{0}^{y}|(\hat{f}^\alpha_{t})'(is)| ds\le y e^{\frac{1}{2}I(\gamma_{\alpha})} .
\end{equation*}

Denote  $ R\left(\alpha, y\right) = \frac{1}{2}\left(r_{\alpha} - y e^{\frac{1}{2}I\left(\gamma^\alpha\right)}\right)$ which is positive if $y < y_0 = r_\alpha e^{-\frac{1}{2}I\left(\gamma_\alpha\right)}$. Then,  we can decompose the sum of the two remaining terms as
\begin{equation*}
\begin{split}
&\mathbb{P}\left[||\hat{f}_{t}(iy)-\hat{f}^{\alpha}_{t}(iy)||_{\infty} + ||\gamma (t)-\hat{f}_{t}(iy)||_{\infty} > 2R\right]\\ &\le \mathbb{P}\left[||\hat{f}_{t}(iy)-\hat{f}^{\alpha}_{t}(iy)||_{\infty}  > R\right] + \mathbb{P}\left[||\gamma (t)-\hat{f}_{t}(iy)||_{\infty} > R\right].
\end{split}
\end{equation*}
So, we have
\begin{equation}\label{after_union_bound}
\begin{split}
\mathbb{P}\left[||\gamma-\gamma^{\alpha}||_{\infty} < r_{\alpha}\right] \ge \mathbb{P}\left[||\hat{f}_{t}(iy)-\hat{f}^{\alpha}_{t}(iy)||_{\infty} \le R\right] - \mathbb{P}\left[  ||\gamma (t)-\hat{f}_{t}(iy)||_{\infty} \ge R\right].
\end{split}
\end{equation}
The second term can be dealt with if we employ the event $P_n$, defined in (\ref{definition_P}),
\begin{equation*}
P_n = \left\{|\hat{f}'_t(iy)|\le \psi(n)y^{-\beta} \text{ for }y \in [0,2^{-n}], t\in [0,1]\right\}, \quad \psi(n) = c_1(1+\log n)^{c_2}.
\end{equation*}
We decompose the event $A = \left\{||\gamma (t)-\hat{f}_{t}(iy)||_{\infty}\ge R\right\}$ into $A = A \cap P_n +  A \cap P_n^c$, so its probability can be bounded as
\begin{equation*}
\mathbb{P}\left[A\right] \le \mathbb{P}\left[A\cap P_n\right] +\mathbb{P}\left[P_n^c\right].
\end{equation*} 
We claim that for small enough $y$ the first term vanishes. On the event $P_n$
\begin{equation*}
|\gamma(t) - \hat{f}_t(iy)|\le \int\limits_0^y|\hat{f}'_t(is)|ds \le \frac{c}{1-\beta}y^{1-\beta}.
\end{equation*}
If we restrict our attention to $y < \min\left(y_0, y_1, 2^{-n}\right)$ where $y_1$ is the solution to
\begin{equation*}
\frac{c}{1-\beta}y_1^{1-\beta} = \frac{1}{2}\left(r_\alpha - y_1e^{\frac{1}{2}I\left(\gamma^\alpha\right)}\right) = R(\alpha, y_1),
\end{equation*}
then   $ A \cap P_n = \varnothing$. The probability of $P_n^c$ was bounded in (\ref{complement_P}), hence, for $y < \min\left(y_0, y_1, 2^{-n}\right)$ and $\kappa < \beta$ we have
\begin{equation}\label{gamma_minus_f}
\mathbb{P}\left[  ||\gamma (t)-\hat{f}_{t}(iy)||_{\infty}\ge R\right] \le \frac{4^n}{1- 4^{-\left(\frac{\beta}{\kappa}-1\right)}} 4^{-\frac{\beta n }{\kappa}} + 2 e^{-\frac{\log n}{\kappa}}.
\end{equation}

Now we move to the first term in (\ref{after_union_bound}).  The inequality for the Loewner maps can be translated to the corresponding driving functions due to Lemma \ref{lemma_continuity}. Namely, we have deterministic inequality
\begin{equation}\label{f_to_drivers}
||\hat{f}_{t}(iy)-\hat{f}^{\alpha}_{t}(iy)||_{\infty} \le ||\sqrt{\kappa}B-\lambda^{\alpha}||_{\infty} \sqrt{1+ \frac{4}{y^2}}
\end{equation}
which by monotonicity implies that
\begin{equation*}
\mathbb{P}\left[||\hat{f}_{t}(iy)-\hat{f}^{\alpha}_{t}(iy)||_{\infty} \le R\right] \ge \mathbb{P}\left[||\sqrt{\kappa}B-\lambda^{\alpha}||_{\infty}  \le \frac{R}{\sqrt{1+\frac{4}{y^2}}}\right].
\end{equation*}
The remaining analysis of this term follows that of Schilder's theorem. 
The idea now is to use absolute continuity of $\lambda^\alpha$ and apply Girsanov's theorem to consider $B - \frac{1}{\sqrt{\kappa}}\lambda^\alpha$ as Brownian motion under a new measure. However, in order to control the Radon-Nikodym derivative  we approximate $\lambda^\alpha$ by a smooth function with very close Dirichlet energy. 

For any $\varepsilon \in \left(0,\alpha\right)$ it is possible to find an absolutely continuous  function $\varphi_{\varepsilon} \in C^2\left(\left[0,1\right]\right)$ such that $\varphi_{\varepsilon}(0)=0$ and $I\left(\lambda^\alpha-\varphi_{\varepsilon}\right) < \varepsilon^2/2$. The latter condition ensures closeness of the energies. Since $\sqrt{I(\cdot)}$ is an $L^2\left(\left[0,1\right]\right)$-norm of a derivative, by triangle inequality 
\begin{equation*}
\sqrt{I(\varphi_\varepsilon)}\le \sqrt{I\left(\lambda^{\alpha}\right)} + \sqrt{I\left(\lambda^{\alpha} - \varphi_{\varepsilon}\right)} \le \sqrt{I(\lambda^\alpha)} + \frac{\varepsilon}{\sqrt{2}}.
\end{equation*}
Moreover, for any $t\in \left[0,1\right]$, by the Cauchy-Schwarz inequality 
\begin{equation*}
|\lambda_{\alpha}(t)- \varphi_{\varepsilon}(t)| \le \int\limits_{0}^{t}|\dot{\lambda}_{\alpha}(s)- \dot{\varphi}_{\varepsilon}(s)|ds \le \sqrt{2tI\left(\lambda_{\alpha} - \varphi_{\varepsilon}\right)}\le \varepsilon.
\end{equation*}
Taking the supremum yields $||\lambda_{\alpha} - \varphi_{\varepsilon}||_{\infty} \le \varepsilon$. 
Hence, for any $\varepsilon < \frac{1}{2}\frac{R}{\sqrt{1+\frac{4}{y^2}}}$ and $\delta < \frac{1}{2}\frac{R}{\sqrt{1+\frac{4}{y^2}}}$ the following inclusion holds
\begin{equation*}
B\left(\varphi_{\varepsilon}, \delta\right) \subset  B\left(\lambda^\alpha, \frac{R}{\sqrt{1+\frac{4}{y^2}}} \right), 
\end{equation*}
which by monotonicity implies
\begin{equation*}
\mathbb{P}\left[||\sqrt{\kappa}B-\lambda^{\alpha}||_{\infty} \le \frac{R}{\sqrt{1+\frac{4}{y^2}}}\right] \ge \mathbb{P}\left[||\sqrt{\kappa}B-\varphi_{\varepsilon}||_{\infty} \le \delta\right]. 
\end{equation*}
By the Girsanov theorem,
\begin{equation}\label{new_BW}
\tilde{B}_{t} = B_{t} - \frac{1}{\sqrt{\kappa}}\int\limits_{0}^{t}\dot\varphi_{\varepsilon}(s)ds
\end{equation}
is a standard Brownian motion under the new measure which we denote by $\tilde{\mathbb{P}}$. In our setting it is convenient to apply the Girsanov's theorem the other way around. That is, we start with a Brownian motion $\tilde{B}$ under the probability measure $\tilde{\mathbb{P}}$, then $B$ from (\ref{new_BW}) is a Brownian motion under $\mathbb{P}$. The connection between $\mathbb{P}$ and $\tilde{\mathbb{P}}$, for the event in question, is given by
\begin{equation*}
\mathbb{P}\left[||\tilde{B}||_{\infty} \le \frac{\delta}{\sqrt{\kappa}}\right] = \tilde{\mathbb{E}}\left[\mathbf{1}\left\{||\tilde{B}||_{\infty}\le \frac{\delta}{\sqrt{\kappa}}\right\}\exp\left\{-\frac{1}{\sqrt{\kappa}}\int\limits_{0}^{1}\dot\varphi_{\varepsilon}(s)d\tilde{B}(s)-\frac{1}{2\kappa}\int\limits_{0}^{1}\dot\varphi_{\varepsilon}(s)^2 ds\right\}\right].
\end{equation*}
Next, by integration by parts for stochastic processes the first term in the exponent can be bounded by
\begin{equation*}
\left|\int\limits_{0}^{1}\dot\varphi_{\varepsilon}(s)d\tilde{B}(s)\right| \le h_\varepsilon||\tilde{B}||_{\infty}, \text{ with } h_\varepsilon = 2 \sup\limits_{t\in[0,1]}\left(|\dot\varphi_{\varepsilon}(t)| + |\ddot\varphi_{\varepsilon}(t)|\right).
\end{equation*}
The second term in the exponent is precisely the Dirichlet energy of $\varphi_\varepsilon$, hence 
\begin{equation}\label{f_minus_f}
\mathbb{P}\left[||\hat{f}_{t}(iy)-\hat{f}^{\alpha}_{t}(iy)||_{\infty} \le R\right]   \ge 
e^{-\frac{I(\varphi_{\varepsilon})+h_\varepsilon\delta}{\kappa}}\mathbb{P}\left[||B||_{\infty} \le \frac{\delta}{\sqrt{\kappa}}\right].
\end{equation}
Combining (\ref{f_minus_f})  and (\ref{gamma_minus_f}) in (\ref{after_union_bound}) we obtain

\begin{equation*}
\mathbb{P}\left[\gamma \in G \right] \ge e^{-\frac{I(\varphi_{\varepsilon})+h_\varepsilon\delta}{\kappa}}\mathbb{P}\left[||B||_{\infty} \le \frac{\delta}{\sqrt{\kappa}}\right] - \left(2 +\frac{4^n}{1- 4^{-\left(\frac{\beta}{\kappa}-1\right)}} \right)e^{-\frac{\log n}{\kappa}}.
\end{equation*}
If we choose $n$, so that $\log n > I(\varphi_{\varepsilon})+h_\varepsilon\delta$, then

\begin{equation*}
\lowerlim\limits_{\kappa\downarrow 0} \kappa \log\mathbb{P}\left[\gamma \in G \right] \ge -I(\varphi_\varepsilon)-h(\varepsilon)\delta \ge - \left(\sqrt{I(\lambda_{\alpha})} + \varepsilon\right)^2 - h_\varepsilon\delta. 
\end{equation*}

To conclude take the limits in the following order: first $\delta \downarrow 0$, then $\varepsilon \downarrow 0$ and finally $\alpha \downarrow 0$ to obtain the desired result for open sets
\begin{equation*}
\lowerlim\limits_{\kappa\downarrow 0} \kappa \log \mathbb{P}\left[\gamma\in G\right]\ge -I\left(G\right).
\end{equation*}
This concludes the proof of large deviation principle for the case $T<\infty$.
\subsection{Projective limit}
So far we have proved that LDP holds on a space of capacity parameterized curves in the upper half-plane restricted to finite time interval $[0,T]$ in the topology induced by the supremum norm. This topological space will be denoted by  $\left(S_T, \tau_T\right)$.  Now we move to the space $S$, defined in (\ref{definition_S}), of continuous curves run all the way to infinity. Equip $S$ with the topology $\tau$ of uniform convergence on compact intervals (compact convergence). Our aim is to translate LDP result from $\left(S_T, \tau_T\right)$ to $\left(S, \tau\right)$.

Transition between $\left(S_T, \tau_T\right)$ and $\left(S, \tau\right)$ is carried out using standard tools of Large Deviations theory, namely Dawson-G{\"a}rtner theorem and contraction principle. This route require an intermediate step: construction of \textit{the projective limit} space. So the Dawson-G{\"a}rtner theorem carries LDP result from $\left(S_T, \tau_T\right)$  to the projective limit space and then contraction principle allows to conclude LDP on  $\left(S, \tau\right)$.  Techniques of this section are rather standard and can be found, for example, in \cite{DemboZeitouni}.

Before we construct the projective limit let us first set up necessary notation.   $\gamma_{T}$ denotes restriction of $\gamma \in S$ to the time interval $[0,T]$.  Consequently, $V_T = \left\{\gamma_T: \gamma \in V\right\}$ is the restriction of subset $V\subset S$. Given our family $\left\{S_T\right\}_{T\in\mathbb{R}_+}$ of finite-time spaces we construct \textit{Cartesian product space}
 \begin{equation*}
 S_{\text{car}} = \prod\limits_{T>0}S_{T}.
 \end{equation*}
By definition $ S_{\text{car}} $ is a family of all function $\Gamma: \mathbb{R}_{+}\to \cup_{T>0} S_{T}\text{ such that } \Gamma(T) \in S_{T}$. This space is equipped with \textit{the product topology} generated by a base that consists of sets of the form 
\begin{equation*}
\left\{\Gamma \in  S_{\text{car}}  : \Gamma(T_{k}) \in G_{T_{k}}, k= \overline{1,n}\right\}
\end{equation*} 
where $\{T_{k}\}_{k=1}^{n}$ is a finite subset of $\mathbb{R}_{+}$ and every $G_{T_{k}}$ is an open subset of $S_{T_{k}}$ in the supremum norm topology.

Cartesian product includes many elements that are of no interest to us. In order to restrict our attention we consider a subspace called \textit{the projective limit}, denoted by $\varprojlim S_T$ and defined via the following construction. First, introduce \textit{projective system} $\left(S_t, \pi_{s, t}\right)_{s\le t}$ which consists of a family $\left\{S_s\right\}_{s<t}$ and continuous maps $\pi_{s,t}: S_t\to S_s$ such that $\pi_{s,t} = \pi_{s,\tau }\circ \pi_{\tau, t}$ whenever $s \le \tau \le t $. Then the projective limit $\varprojlim S_T $ is a subspace of $ S_{\text{car}} $ consisting of 
\begin{equation*}
\varprojlim S_T  = \left\{ \left\{\gamma_t\right\}_{t>0} \in S_{\text{car}}:  \gamma_s = \pi_{s,t}(\gamma_t) \text{ for }s\le t\right\}
\end{equation*}
The projective limit is equipped with the topology $\tau_p$ induced by the product topology on $S_\text{car}$.

There is a natural way to construct a projective system. On the product space $ S_{\text{car}}$ we  define coordinate maps $p_T:  S_{\text{car}} \to S_T$ which give $T$-coordinate of $\Gamma \in  S_{\text{car}}$, i.e., $p_T\left[\Gamma\right] = \Gamma(T)$. Restriction of these coordinate maps to the projective limit $\varprojlim S_T$ are called \textit{canonical projections}. These projections $p_T:\varprojlim S_T \to S_T $ are continuous. Note that the product topology  $\tau_p$ is smallest one which ensures continuity of canonical projections. Setting $\pi_{s,t} = p_s\circ p^{-1}_t$ gives rise to projective system since
\begin{equation*}
\pi_{s,t} = p_s \circ p^{-1}_{\tau}\circ p_\tau\circ p^{-1}_t = \pi_{s, \tau}\circ \pi_{\tau, t}, \text{ whenever }  s \le \tau \le t.
\end{equation*}

Having defined the projective limit space $\left(\varprojlim S_T, \tau_p\right)$ we can state our plan in the following pictorial way:

\begin{figure}[H]
\centering
\begin{tikzpicture}
[
node distance=3cm,
squarednode/.style={rectangle, draw=black!60, thick, minimum size=10mm}
]
\node[squarednode]   (ST)  {\begin{tabular}{c} LDP on \\ $\left(S_T, \tau_T\right)$\end{tabular}};

\node[squarednode]  (Sproj)  [right=of ST] {\begin{tabular}{c} LDP on\\  $\left(\varprojlim S_T, \tau_p\right)$\end{tabular}};

\node[squarednode]   (S)  [right=of Sproj] {\begin{tabular}{c} LDP on \\ $\left(S, \tau\right)$\end{tabular}};

\draw[->]  (ST) to node[midway,align=center] { Dawson-G$\ddot{\text{a}}$rtner\\ theorem} (Sproj);
\draw[->]  (Sproj.east) to node[midway,align=center] {Contraction\\ principle} (S.west);
\end{tikzpicture}
\end{figure}

In order to apply contraction principle we need to ensure existence of continuous mapping $p:\varprojlim S_T \to S$. In fact, there is a continuous bijection between the two spaces.  For any $\Gamma \in  \varprojlim S_T$ define a curve $p\left[\Gamma\right]$ by setting $p\left[\Gamma\right](t) = p_t\left[\Gamma\right](t)$ via canonical projections. Conversely, for any $\gamma \in S$ define an element $ p^{-1}\left[\gamma\right] = \left\{\gamma_T\right\}_{T>0}$ of the projective limit space via restrictions. 
\begin{proposition}
A bijection $p: \varprojlim S_T \to S$ is continuous if $S$ is equipped with a topology of compact convergence.  
\end{proposition}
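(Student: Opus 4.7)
The plan is to exploit the fact that the product topology $\tau_p$ on $\varprojlim S_T$ is by construction the coarsest topology making every canonical projection $p_T:\varprojlim S_T \to S_T$ continuous. In particular, to verify continuity of $p$ into $(S,\tau)$ it is enough to pick a convenient subbase of $\tau$ and check that its preimages under $p$ can be rewritten as preimages of open sets in some $S_T$ under the (already continuous) projection $p_T$.

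First, I choose a subbase for the compact-convergence topology $\tau$ on $S$. Since $\tau$ is generated by the family of seminorms $\gamma \mapsto \|\gamma\|_{\infty,[0,T]}$, a subbase is given by the ``tubes''
\begin{equation*}
U(\gamma_0, T, \varepsilon) \;=\; \bigl\{\gamma\in S : \|\gamma-\gamma_0\|_{\infty,[0,T]} < \varepsilon\bigr\},
\qquad \gamma_0\in S,\ T>0,\ \varepsilon>0.
\end{equation*}

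Next, I relate $p$ to the canonical projections. By definition $p[\Gamma](t)=\Gamma(t)(t)$, and the compatibility condition $\gamma_s=\pi_{s,t}(\gamma_t)$ built into the projective limit is precisely the statement that $\gamma_s$ is the restriction of $\gamma_t$ to $[0,s]$. Consequently $p[\Gamma]\big|_{[0,T]}$ coincides with the $T$-th coordinate $p_T[\Gamma]=\gamma_T \in S_T$. Hence
\begin{equation*}
p^{-1}\bigl(U(\gamma_0,T,\varepsilon)\bigr) \;=\; p_T^{-1}\Bigl(\bigl\{\tilde\gamma\in S_T : \|\tilde\gamma-\gamma_0|_{[0,T]}\|_\infty < \varepsilon\bigr\}\Bigr).
\end{equation*}
The set on the right is the $p_T$-preimage of a sup-norm open ball in $S_T$; since $p_T$ is continuous from $(\varprojlim S_T,\tau_p)$ to $(S_T,\|\cdot\|_\infty)$, this preimage lies in $\tau_p$.

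This shows that preimages of a subbase of $\tau$ are open, so $p$ is continuous. The argument is essentially tautological: the only step requiring care is the bookkeeping identification $p[\Gamma]|_{[0,T]}=\gamma_T$, which is exactly the content of the projective-system compatibility, so there is no substantive obstacle.
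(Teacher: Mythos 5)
Your proof is correct, but it takes a genuinely different route from the paper. You verify continuity on a subbase of the compact-convergence topology: you observe that the tubes $U(\gamma_0,T,\varepsilon)$ generate $\tau$ (since the pseudometrics $\|\cdot\|_{\infty,[0,T]}$ are monotone in $T$, these in fact form a basis), use the compatibility condition of the projective limit to identify $p[\Gamma]\big|_{[0,T]}=p_T[\Gamma]$, and then write $p^{-1}(U(\gamma_0,T,\varepsilon))$ as the $p_T$-preimage of an explicit sup-norm ball in $S_T$, which is open because the canonical projection $p_T$ is continuous. The paper instead argues with closed sets: for $F\subset S$ closed it writes $p^{-1}[F]=\bigcap_{T>0}p_T^{-1}[F_T]$, where $F_T$ is the set of restrictions, and concludes that this is an intersection of closed sets. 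Your version has the advantage that each preimage you take is of a concretely open ball, so continuity of a single projection suffices and no question arises about the topological nature of restricted sets such as $F_T$ (which the paper's one-line argument implicitly relies on, together with the fact that the intersection identity needs $F$ closed); the paper's version is shorter and avoids discussing a generating family for $\tau$. The only points to state explicitly in your write-up are the ones you already flag: that the tubes indeed form a subbase (which uses monotonicity of the sup over $[0,T]$ in $T$), and that $\gamma_0|_{[0,T]}\in S_T$ so the ball on the right-hand side is a bona fide open subset of $(S_T,\|\cdot\|_\infty)$. Neither is a gap; both are immediate.
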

\begin{proof}
For any closed subset $F\subset S$ in the topology of compact convergence 
\begin{equation*}
p^{-1}\left[F\right] = \bigcap_{T>0}p^{-1}_T\left[F_T\right].
\end{equation*}
By continuity of canonical projections every $p^{-1}_T\left[F_T\right]$ is closed and so is arbitrary intersection of closed sets.
\end{proof}

In what follows $\mathbb{P}$ is SLE probability measure and $\gamma^{\kappa}$ is SLE$_\kappa$ curve. Let $\left\{\mu_\kappa\right\}_{\kappa>0}$ be a family of probability measures on measurable space $\left(S, \sigma\left(\tau\right)\right)$, where $\sigma(\tau)$ denotes Borel $\sigma$-algebra generated by $\tau$, given by
\begin{equation*}
\mu_{\kappa}\left(V\right) = \mathbb{P}\left[\gamma^\kappa \in V \right], \quad V \in \sigma\left(\tau\right).
\end{equation*}
 Similarly let $\nu_\kappa$ be a probability measure on $\left(\varprojlim S_T, \sigma\left(\tau_p\right)\right)$ defined by
\begin{equation*}
\nu_\kappa\left(\mathcal{V}  \right) = \mu_\kappa\left(p\left[\mathcal{V}\right]\right) = \mathbb{P}\left[\gamma^\kappa \in p\left[\mathcal{V}\right]\right], \quad \mathcal{V} \in \sigma\left(\tau_p\right).
\end{equation*}
Then $\nu_\kappa$ gives rise to  probability measures on $S_T$ via canonical projections. For any $V_T \in \sigma(\tau_T)$
\begin{equation*}
\nu_\kappa\circ p^{-1}_T\left(V_T\right) = \mu_\kappa\left( p\left[p^{-1}_T\left[V_T\right]\right] \right) = \mathbb{P}\left[\gamma^\kappa_T \in V_T \right]. 
\end{equation*}
In this notation what we have proved so far is that a family $\left\{\nu_\kappa\circ p_T^{-1}\right\}_{\kappa>0}$ of probability measures on $\left(S_T, \sigma(\tau_T)\right)$ satisfies LDP for every $T>0$ with a good rate function $I_T$ given by
\begin{equation*}
I_{T}\left(\gamma_T\right)= \begin{cases}
\frac{1}{2}\int\limits_{0}^{T}\dot{\lambda}(t)^{2}dt, \text{ if }\lambda\text{ is absolutely continuous},\\
\infty, \text{ otherwise};
\end{cases}
\end{equation*}
here $\lambda$ is the driving function of $\gamma_T$.

The Dawson-G$\ddot{\text{a}}$rtner theorem allows to extend this LDP results the projective limit space $\varprojlim S_T$. 
\begin{theorem}[Dawson-G$\ddot{\text{a}}$rtner]
Let $\{\nu_{\varepsilon}\}_{\varepsilon>0}$ be a family of probability measures on $\varprojlim X_{\alpha}$ such that $\{\nu_{\varepsilon}\circ p_{\alpha}^{-1}\}_{\varepsilon>0}$ satisfies LDP on every $X_{\alpha}$ with a good rate function $I_{\alpha}$. Then $\{\nu_{\varepsilon}\}_{\varepsilon>0}$ satisfies LDP with a good rate function $I'(x) = \sup\limits_{\alpha}I_{\alpha}(p_{\alpha}(x))$, $x \in\varprojlim X_{\alpha}$.
\end{theorem}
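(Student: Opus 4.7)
The plan is to verify three things in sequence: $I'$ is a good rate function, the LDP lower bound on open sets, and the LDP upper bound on closed sets.

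For goodness, each $I_\alpha\circ p_\alpha$ is lower semi-continuous (composition of the lsc $I_\alpha$ with the continuous canonical projection $p_\alpha$), so $I'=\sup_\alpha I_\alpha\circ p_\alpha$ is lsc. The sublevel set $\{I'\le c\}$ equals $\varprojlim X_\alpha\cap\prod_\alpha\{I_\alpha\le c\}$; by goodness of each $I_\alpha$ together with Tychonoff, this is a closed subset of a compact set, hence compact.

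For the lower bound on an open $G$ and $x\in G$, directedness of the index set produces a basic cylinder $p_\alpha^{-1}(U_\alpha)\subset G$ with $U_\alpha\ni p_\alpha(x)$ open in $X_\alpha$. The LDP on $X_\alpha$ then gives
$$\lowlim_{\varepsilon\downarrow 0}\varepsilon\log\nu_\varepsilon(G)\ge -\inf_{U_\alpha}I_\alpha\ge -I_\alpha(p_\alpha(x))\ge -I'(x),$$
and supremising over $x\in G$ yields $-I'(G)$.

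For the upper bound, I would first reduce to compact sets via exponential tightness: since $\{I'\le L\}$ is compact and its complement decomposes as $\bigcup_\alpha p_\alpha^{-1}(\{I_\alpha>L\})$, a union bound over a countable cofinal subfamily combined with the upper bounds on each $X_\alpha$ shows $\uplim\varepsilon\log\nu_\varepsilon(\{I'>L\})\le -L$. On a compact $K$, the inclusion $K\subset p_\alpha^{-1}(p_\alpha(K))$ with $p_\alpha(K)$ already compact gives $\uplim\varepsilon\log\nu_\varepsilon(K)\le -\sup_\alpha\inf_{p_\alpha(K)}I_\alpha$ via the LDP on $X_\alpha$. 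The core task is then the minimax identity $\sup_\alpha\inf_{p_\alpha(K)}I_\alpha\ge I'(K)$. I would prove this using (i) the monotonicity $I_\alpha(p_\alpha(x))\le I_\beta(p_\beta(x))$ for $\alpha\le\beta$, a direct consequence of the contraction principle applied to the transition map $\pi_{\alpha,\beta}$, and (ii) compactness of $K$: take minimizers $x_\alpha\in K$ of $I_\alpha\circ p_\alpha$, extract a subnet limit $x^*\in K$, and for each fixed $\beta$ combine monotonicity with lsc of $I_\beta\circ p_\beta$ to obtain $I_\beta(p_\beta(x^*))\le\sup_\alpha\inf_K I_\alpha\circ p_\alpha$; supremising over $\beta$ gives $I'(x^*)\le\sup_\alpha\inf_K I_\alpha\circ p_\alpha$, hence $I'(K)\le\sup_\alpha\inf_{p_\alpha(K)}I_\alpha$.

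The main obstacle is this minimax step, which runs opposite to the usual minimax inequality and hinges on a delicate synthesis of monotonicity along the directed projective system (via contraction), compactness of $K$, and lsc of each $I_\alpha\circ p_\alpha$. A secondary point is the exponential-tightness reduction from closed to compact sets; this is clean when the index set admits a countable cofinal subfamily (as in the paper's application with $T\in\mathbb{R}_+$), but requires more care in the abstract setting where the union $\bigcup_\alpha p_\alpha^{-1}(\{I_\alpha>L\})$ need not be a priori measurable.
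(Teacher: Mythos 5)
The paper itself does not prove this statement; it quotes it from Dembo--Zeitouni, so the relevant comparison is with the standard proof there. Much of your proposal is sound: the lower bound via a basic cylinder $p_\alpha^{-1}(U_\alpha)\subset G$ (using directedness), the goodness of $I'$ (the projective limit is closed in the product, so the level set is a closed subset of a Tychonoff-compact product of level sets), and the compact-set identity $\inf_K I'\le\sup_\alpha\inf_{p_\alpha(K)}I_\alpha$ via monotonicity (contraction principle plus uniqueness of rate functions) and a subnet of minimizers. The genuine gap is the reduction from closed to compact sets via exponential tightness. First, the LDP upper bound on $X_\alpha$ applies to closed sets, whereas $\{I_\alpha>L\}$ is open; its closure can contain points of arbitrarily small rate (lower semicontinuity allows the rate to drop at limit points), so $\limsup_{\varepsilon\downarrow0}\varepsilon\log\bigl(\nu_\varepsilon\circ p_\alpha^{-1}\bigr)\bigl(\{I_\alpha>L\}\bigr)\le-L$ does not follow from the hypotheses. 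Second, even granting each such bound, your union bound sums countably many asymptotic estimates, and $\limsup\varepsilon\log$ of a sum is controlled by the worst term only for finitely many summands; no uniformity in $n$ is available. Third, and structurally, exponential tightness of $\{\nu_\varepsilon\}$ is simply not a consequence of the hypotheses: the projective limit need not be Polish (indeed in this paper $S$ is not complete), and the value of Dawson--G\"artner is precisely that it yields a full LDP without exponential tightness. So the upper bound for general closed sets is not established by your argument.

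The standard repair, and the route in Dembo--Zeitouni, treats closed sets directly: for closed $F$ one has $F\subset p_\alpha^{-1}\bigl(\overline{p_\alpha(F)}\bigr)$, hence $\limsup_{\varepsilon\downarrow0}\varepsilon\log\nu_\varepsilon(F)\le-\sup_\alpha\inf_{\overline{p_\alpha(F)}}I_\alpha$, and the key identity $\sup_\alpha\inf_{\overline{p_\alpha(F)}}I_\alpha\ge\inf_F I'$ is proved not by a subnet of minimizers (which needs $F$ compact) but by contradiction: if the left side were $<a<\inf_F I'$, the nonempty compact sets $\overline{p_\alpha(F)}\cap\{I_\alpha\le a\}$ form a projective system of nonempty compacts, whose projective limit is nonempty; any point $x$ of it satisfies $I'(x)\le a$ and $p_\alpha(x)\in\overline{p_\alpha(F)}$ for all $\alpha$, and since $F=\bigcap_\alpha p_\alpha^{-1}\bigl(\overline{p_\alpha(F)}\bigr)$ for $F$ closed in the projective-limit topology, $x\in F$, a contradiction. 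Your compact-set minimax is a special case of this lemma, but it is the closed-set version that the theorem requires; with that replacement (and dropping the tightness step) your outline becomes the standard proof.
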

Therefore, the theorem asserts that $\{\nu_{\kappa}\}_{\kappa>0}$ satisfies LDP on the projective limit $\left(\varprojlim S_T, \sigma(\tau_p)\right)$ with a good rate function 
\begin{equation*}
I'(\Gamma) = \sup\limits_{T\in\mathbb{R}_{+}} I_T(\gamma_T) =\sup\limits_{T\in\mathbb{R}_{+}} I_T(\lambda^\gamma)   =  \frac{1}{2}\int\limits_{0}^{\infty}\left(\frac{d\lambda^{\gamma}}{dt}\right)^2 dt \text{ for any } \Gamma \in \varprojlim S_T, 
\end{equation*}
if $\lambda^\gamma$ is absolutely continuous and infinity otherwise.  Next, to translate this result to $\left(S, \sigma(\tau)\right)$ we use contraction principle.

\begin{theorem}[Contraction principle]\label{contraction principle}
If a family of probability measures $\left\{\nu_\varepsilon\right\}_{\varepsilon>0}$ satisfies LDP on a space $\mathcal{X}$ with the good rate function $I':\mathcal{X}\to [0,\infty]$ and the mapping $f:\mathcal{X}\to\mathcal{Y}$ is continuous, then $\left\{\nu_\varepsilon\circ f^{-1}\right\}_{\varepsilon>0}$ satisfies LDP on $\mathcal{Y}$ with the good rate function $I(y) = \inf\left\{I'(x): x\in\mathcal{X}, y = f(x)\right\}$.
\end{theorem}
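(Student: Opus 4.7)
The plan is to reduce the LDP for $\{\nu_\varepsilon \circ f^{-1}\}_{\varepsilon>0}$ on $\mathcal{Y}$ to the LDP already known for $\{\nu_\varepsilon\}_{\varepsilon>0}$ on $\mathcal{X}$ by pulling everything back through $f$. The key algebraic identity I would use throughout is
\begin{equation*}
\inf_{x \in f^{-1}(V)} I'(x) \;=\; \inf_{y \in V} I(y),
\end{equation*}
valid for any $V \subset \mathcal{Y}$, which is immediate from the definition $I(y) = \inf\{I'(x) : f(x)=y\}$ together with the observation that $f^{-1}(V) = \{x : f(x) \in V\}$. I would also use the trivial measure-theoretic equality $\nu_\varepsilon \circ f^{-1}(V) = \nu_\varepsilon(f^{-1}(V))$.

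First I would verify that $I$ is a good rate function. Since $f$ is continuous and $\{x : I'(x) \le c\}$ is compact by hypothesis, the level set
\begin{equation*}
\{y \in \mathcal{Y} : I(y) \le c\} \;=\; f\bigl(\{x \in \mathcal{X} : I'(x) \le c\}\bigr)
\end{equation*}
is the continuous image of a compact set, hence compact. (The equality is a short verification: the $\supset$ inclusion is obvious, while for $\subset$ one uses compactness of the level sets of $I'$ to ensure the infimum defining $I(y)$ is attained.) Compactness of level sets in turn implies lower semicontinuity, so $I$ satisfies Definition \ref{definition_of_rate_function}. It is also not identically infinite, inherited from $I'$.

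Next I would check the two LDP inequalities of Definition \ref{definition_of_LDP}. For any closed $F \subset \mathcal{Y}$, continuity of $f$ gives that $f^{-1}(F)$ is closed in $\mathcal{X}$, so the upper bound for $\{\nu_\varepsilon\}$ yields
\begin{equation*}
\uplim_{\varepsilon \downarrow 0} \varepsilon \log \nu_\varepsilon(f^{-1}(F)) \;\le\; -\inf_{x \in f^{-1}(F)} I'(x) \;=\; -\inf_{y \in F} I(y) \;=\; -I(F),
\end{equation*}
using the key identity above. Since the left-hand side equals $\uplim_{\varepsilon \downarrow 0} \varepsilon \log (\nu_\varepsilon \circ f^{-1})(F)$, the closed-set bound holds. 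The open-set bound is completely analogous: for open $G \subset \mathcal{Y}$, the set $f^{-1}(G)$ is open in $\mathcal{X}$, and the lower bound for $\{\nu_\varepsilon\}$ combined with the same identity gives $\lowerlim_{\varepsilon \downarrow 0} \varepsilon \log (\nu_\varepsilon \circ f^{-1})(G) \ge -I(G)$.

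I do not expect any serious obstacle: the argument is purely formal once the identity relating the infima of $I'$ on $f^{-1}(V)$ and of $I$ on $V$ is established. The only mildly subtle point is justifying that the defining infimum of $I(y)$ is attained whenever $I(y) < \infty$, which is needed to conclude that the level-set image equals the level set of $I$; this uses goodness of $I'$ (compactness of $\{I' \le c\}$) combined with lower semicontinuity of $I'$ to produce a minimizer in the nonempty closed set $f^{-1}(\{y\}) \cap \{I' \le I(y) + 1\}$.
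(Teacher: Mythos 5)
Your argument is correct and is precisely the standard textbook proof of the contraction principle (as in Dembo--Zeitouni): pull back closed and open sets by the continuous map, use the identity $\inf_{f^{-1}(V)} I' = \inf_V I$, and obtain goodness of $I$ from the fact that its level sets are continuous images of the compact level sets of $I'$, the attainment of the infimum following from lower semicontinuity of $I'$ on those compact sets. The paper does not prove this theorem but cites it as standard, so there is nothing to compare beyond noting that your proof matches the classical one.
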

Applying  this principle to our setting we deduce that $\left\{\mu_\kappa\right\}_{\kappa>0} = \left\{\nu_\kappa\circ p^{-1}\right\}_{\kappa>0}$ satisfies LDP on $(S, \sigma(\tau))$ with a good rate function 
\begin{equation*}
I(\gamma)  =\inf\left\{I'(\Gamma): \Gamma \in \varprojlim S_T, \gamma = g(\Gamma)\right\} = I'(g^{-1}(\gamma))=  \frac{1}{2}\int\limits_{0}^{\infty}\left(\frac{d\lambda^{\gamma}}{dt}\right)^2 dt \text{ for any } \gamma \in S, 
\end{equation*}
if $\lambda^\gamma$ is absolutely continuous and infinity otherwise. This concludes the extension of LDP to the topological space $\left(S, \tau\right)$ and completes the proof of Theorem \ref{LDP_theorem}.

\section{Further comments}\label{sect:further-comments}
As was noted in the introduction the large deviation statement for SLE curves depends on the topology. For example, one could consider the SLE curves as 
		\begin{enumerate}[itemsep=0ex]
		\item Subsets of the upper half-plane $\mathbb{H}$;
		\item Continuous curves in $\mathbb{H}$;
		\begin{enumerate}[topsep=0pt, itemsep=0ex]
				\item modulo reparametrizaton;
		\item in the half-plane capacity parametrization;
		\item in the Natural Parametrization;
		\end{enumerate}
		\item $p$-variation paths;
		\item Elements of a suitable Besov space.
		\end{enumerate}
		
	The first item was dealt with in \cite{PeltolaWang}, where the authors considered the SLE curves as subsets of the upper half-plane and measured distances with the Hausdorff metric. The second item (b) was the objective of the present paper (and (a) follows as well). It would be interesting to try to prove an LDP using the Natural Parametrization, i.e., for SLE$_\kappa$ -- the $d$-dimensional Minkowski content, and for finite Loewner energy curves -- the  arclength.  However, more work would be needed to address this problem, e.g., since  analytic properties of Loewner curves, in particular SLE, depend on the choice of parametrization (cf. \cite{Optimal} and \cite{Lawler2012basic}). 

	It would also be interesting to try to consider the SLE curves as $p$-variation paths. It was established in \cite{Friz_Tran_regularity} that SLE$_\kappa$ (for $\kappa \neq 8$) enjoys $p$-variation regularity
		\begin{equation*}
		||\gamma^\kappa||_{p\text{-var}, [0,1]} = \left(\sup\limits_{\mathcal{P}} \sum\limits_{i=1}^{|\mathcal{P}|}|\gamma(t_i) -\gamma(t_{i-1})|^p\right)^{1/p}<\infty \text{ for all }p> \min\left(1+\frac{\kappa}{8}, 2\right),
		\end{equation*}  
		where $\mathcal{P}$ is a partition of $[0,1]$. Approach based on the $p$-variation distance offers a parametrization-free study of SLE curves since the topology generated by the induced metric includes all parametrization-dependent topologies. We refer to the same paper for a discussion of Besov regularity.

There is also alternative way to prove LDP for SLE in the uniform topology based on \textit{the inverse contraction principle}. One should start with LDP in a weaker topology, for example, the Hausdorff topology; this result was proved in the work of E. Peltola and Y. Wang \cite{PeltolaWang}, and then lift the topology to a stronger one by showing exponential tightness of SLE measures, namely 
\begin{equation*}
\forall M>0 \ \exists \text{ compact set } K_M\subset S_T: \uplim_{\kappa\downarrow 0}\kappa\log\mathbb{P}\left[\gamma^\kappa \in K_M^c\right]  < - M.
\end{equation*} 
It seems that deriving exponential tightness would require the same machinery as in our proof for closed sets, where the main work was to estimate probability of the derivative estimate for the inverse SLE map. However, once we have obtained that probability estimate it is no problem to work out estimates for both open and closed sets. 
 
\begin{figure}[H]
\centering
\begin{tikzpicture}
[
node distance=5.5cm,
squarednode/.style={rectangle,rounded corners, draw=black!60, thick, minimum size=10mm}
]
\node[squarednode]  (Hausdorff)  {\begin{tabular}{c} LDP on $[0,T]$\\
in the Hausdorff topology\end{tabular}};
\node[squarednode]   (uniform) [right=of Hausdorff]  {\begin{tabular}{c} LDP on $[0,T]$\\
in the uniform topology\end{tabular}};
\draw[->]  (Hausdorff) to node[above,align=center] { Inverse Contraction Principle} (uniform);

\end{tikzpicture}
\end{figure}

On a different note, the technique used in the proof for open sets can be applied to derive the support theorem for SLE that was shown by H. Tran and Y. Yuan in \cite{SLEsupport}. Unfortunately the bound (\ref{E_complement}) is not well suited for taking the limit $n \to \infty$ and at this stage the proof for SLE support goes through only for small $\kappa$. Nevertheless, one can try to improve the moment estimate (\ref{expectation_bound}) in order to eliminate the factor $4^n$ in (\ref{E_complement}), so that the bound would be well adapted for taking the limit $n\to\infty$.

\section{Appendix: distortion estimate in the rectangle}\label{appendix}

The following lemma originates from \cite{Optimal}.
\begin{lemma}\label{distortion_in_S}
Let $z_{1},z_2 \in S = \{x+iy: x\in \left[-1,1\right], y\in \left[0,1\right] \}$ and assume $\text{Im}z_2, \text{Im}z_1\ge y$. Then for any conformal map $f:2S\to\mathbb{C}$ there are universal constants $c_1, c_2$, both greater than one, such that
\begin{equation*}
|f'\left(z_2\right)| \le c_1 y^{-c_2} |f'\left(z_1\right)|.
\end{equation*}

\end{lemma}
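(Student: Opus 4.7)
The plan is to reduce the rectangle distortion estimate to a chain of standard Koebe distortion estimates (Lemma~\ref{Koebe}) along a carefully chosen path from $z_1$ to $z_2$ inside $S$. The key geometric observation is that for any $z = x + i h$ with $x \in [-1,1]$ and $h \in [y,1]$, the Euclidean distance from $z$ to $\partial(2S)$ equals $\min(h,\, 2-h,\, x+2,\, 2-x) = h$, so a disk of radius $rh$ centered at $z$ is contained in $2S$ for every $r\in[0,1)$. Hence Koebe applies uniformly with that scale.

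First I would connect $z_1$ and $z_2$ by a three-leg polygonal path that stays in $S$: go vertically up from $z_1$ to $x_1 + i$, horizontally across to $x_2 + i$ at height one, then vertically down to $z_2$. On the two vertical legs I would walk in geometric steps that multiply the imaginary part by a fixed ratio, say $3/2$, so that each step $z \to w$ satisfies $|z-w| = h/2 = \tfrac12\,\mathrm{dist}(z,\partial(2S))$. A single step then costs at most the Koebe factor $K = (1+\tfrac12)/(1-\tfrac12)^3 = 12$, and going from height $y$ up to height $1$ requires at most $N(y) = \lceil \log(1/y)/\log(3/2)\rceil$ such steps. On the horizontal leg, the distance to $\partial(2S)$ is at least $1$, so $O(1)$ steps of size $1/2$ suffice to cover the horizontal displacement of at most $2$; this leg contributes only a universal constant factor.

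Multiplying all the per-step Koebe factors then yields
\[
\frac{|f'(z_2)|}{|f'(z_1)|} \;\le\; K^{\,2N(y)+O(1)} \;\le\; c_1\, y^{-c_2},
\]
with $c_2 = 2\log K/\log(3/2)$ and $c_1$ absorbing the horizontal piece and the ceiling overhead; both constants are universal and manifestly $>1$.

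The only mildly delicate point is to verify at each step that the disk used in Koebe actually lies in the domain $2S$ of $f$, which is where the choice of the enlarged rectangle $2S$ (rather than $S$) becomes important; after the observation above on $\mathrm{dist}(z,\partial(2S))$, this is automatic. Apart from that, the argument is routine bookkeeping of the geometric scale factor, and no further analytic input beyond Lemma~\ref{Koebe} is required.
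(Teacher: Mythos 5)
Your argument is correct and is essentially the paper's own: both proofs chain the Koebe distortion estimate (Lemma~\ref{Koebe}) $O(\log(1/y))$ times along an up--across--down path from $z_1$ to $z_2$, using that points of $S$ at height $h$ lie at distance $h$ from $\partial(2S)$, which yields the polynomial factor $y^{-c_2}$ with universal constants. The only difference is bookkeeping: you take explicit geometric steps multiplying the height by a fixed ratio, while the paper organizes the same count via a dyadic decomposition of $S$ into rectangles with a uniform per-rectangle distortion constant.
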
	
	
\begin{proof}
Let $\left(S_{j,k}\right)_{j,k}$ be the dyadic decomposition of the domain S, i.e., the collection of rectangles given by
\begin{equation*}
S_{j,k} = \left\{x+iy: x\in \left[\frac{j}{2^{k}}, \frac{j+1}{2^{k}}\right], y\in \left[2^{-\left(k+1\right)}, 2^{-k}\right]\right\}.
\end{equation*}
Pick one rectangle $S_{j,k}$ and consider two points $z,w \in S_{j,k}$ inside. They are at most distance $D = \sqrt{5}\cdot2^{-\left(k+1\right)}$ away from each other. To compare the derivatives at these points we apply Koebe distortion theorem, Lemma~\ref{Koebe}. The point closest to the boundary of $2S$, say $z$, is at least distance $d = 2^{-(k+1)}$ away from it. In order to cover the distance between $z$ and $w$ by intervals of length $rd$, with $r\in(0,1)$, we need at most $\lceil D/rd\rceil = \lceil\sqrt{5}/r \rceil $ of them.  That is we need to apply Koebe distortion $\lceil\sqrt{5}/r \rceil $ times. Hence, for any $ w,z \in S_{j,k}$

\begin{equation}\label{distortion_in_one_square}
|f'\left(w\right)| \le c(r) |f'\left(z\right)|, \text{ with } c(r) = \left(\frac{1+r}{\left(1-r\right)^3}\right)^{\lceil \sqrt{5}/r\rceil}.
\end{equation}
Note that the constant in the inequality does not depend on our choice of $S_{j,k}$, it is uniform for all rectangles. This is the main advantage of the dyadic  decomposition.

Now we return to initial points $z_1 \text{ and } z_2$ we started with. Both of them are at least $y$ away from the real line by the assumption. Let $k\in\mathbb{N}$ be such that $y\in [2^{-\left(k+1\right)}, 2^{-k}]$, that is,
\begin{equation*}
k = \left\lfloor \log_{2}\frac{1}{y}\right\rfloor.
\end{equation*} 
There exists a path from $z_1$ to $z_2$ that goes through at most $2\left(k+1\right)$ squares. Hence, applying the estimate (\ref{distortion_in_one_square}) $2\left(k+1\right)$ times we obtain 
\begin{equation*}
|f'\left(z_2\right)| \le c_1 y^{-c_2} |f'\left(z_1\right)|,
\end{equation*}
where $c_1 =c(r)^2$ and $c_2 = 2\log_2c(r)$. For example, if $r$ is chosen to be $1/2$, then $c_1 = 12^{10}$ and $c_2 = 2\log_2 12$.
\end{proof}

\newpage
\bibliographystyle{plain}
\bibliography{references}
\end{document}